\newlength{\abstractwidth}
\numberwithin{equation}{section}
\flushbottom \thispagestyle{empty} \pagestyle{plain}
\renewcommand{\thanks}[1]{\footnote{#1}} 
\newcommand{\be}{\begin{equation}}
\newcommand{\bea}{\begin{eqnarray}}
\newcommand{\eea}{\end{eqnarray}} \newcommand{\ee}{\end{equation}}
 \def\ba{\begin{eqnarray}}
\def\ea{\end{eqnarray}}
\def\B{{\cal B}}
\def\D{{\cal D}}
\def\K{{\cal K}}
\def\cX{{\cal X}}
\def\cL{{\cal L}}
\def\r{\rho}
\def\cS{{\cal S}}
\def\fg{\mathfrak{g}}
\def\ft{\mathfrak{t}}
\def\o{\omega}
\def\det{{\rm det}}
\def\log{\,{\rm log}\,}
\def\exp{\,{\rm exp}\,}
\def\o{\omega}
\def\al{\alpha}
\def\b{\beta}
\def\g{\gamma}
\def\d{\delta}
\def\l{\lambda}
\def\o{\omega}
\def\r{\rho}
\def\si{\sigma}
\def\t{\theta}
\def\D{\Delta}
\def\O{\Omega}
\def\ph{\varphi}
\def\L{\Lambda}
\def\k{\kappa}
\def\ov{\overline}
\def\ti{\tilde}
\def\i{\infty}
\def\B{{\cal B}}
\def\D{\Delta}
\def\cI{{\cal I}}
\def\cF{{\cal F}}
\def\cW{{\cal W}}
\def\K{{K\"ahler\ }}
\def\[{{\bf [}}
\def\]{{\bf ]}}
\def\cpx{\mathbb{C}}
\def\proj{\mathbb{P}}
\def\real{\mathbb{R}}
\def\pP{{\partial P}}
\def\pa{\partial}
\def\opa{\overline{\partial}}
\def\cV{\mathcal{V}}
\def\cP{\mathcal{P}}
\def\cE{\mathcal{E}}
\newtheorem{theorem}{Theorem}
\newtheorem{proposition}{Proposition}[section]
\newtheorem{lemma}[proposition]{Lemma}
\newtheorem{definition}[proposition]{Definition}
\begin{document}

\begin{normalsize}

\begin{center}
    {\large \bf Scalar Curvature and Stability of Toric Fibrations
        \footnote{Work supported in part by National Science Foundation grant
                    DMS-07-57372.}}

    \bigskip

    {Thomas Nyberg} \\

    \medskip

    \today

    \smallskip

\end{center}

\medskip

\begin{abstract}
    We study fibrations $\cV$ of toric varieties over the flag variety $G/T$,
    where $G$ is a compact semisimple Lie group and $T$ is a maximal torus. From
    symplectic data, we construct test configurations of $\cV$ and compute their
    Futaki invariants by employing a generalization of Pick's Theorem. We also
    give a simple form of the Mabuchi Functional.
\end{abstract}

\section{Introduction}

A major open problem in complex geometry is to find algebro-geometric
``stability" conditions on a complex manifold which imply the existence of
constant scalar curvature \K metrics. For an overview of the status of the
general problem, the reader is refered to Phong and Sturm's work
\cite{phong-sturm}. There have been many advances in the general problem, but an
especially fruitful area of research has been the analysis of this question
within the framework of toric varieties.
%

%
Guillemin \cite{guil} and Abreu \cite{abreu} were able to transform the
differential geometry of an $n$-dimensional toric variety $V$ to that of its
moment polytope $\ov{P}$ (following Donaldson's conventions, we denote by $P$
the interior of the moment polytope).  Instead of studying toric \K metrics
directly, one studies their corresponding \textit{symplectic potentials}
$u$---continuous convex functions defined on $\ov{P}$ which are smoooth on $P$.
Abreu found the equation for the scalar curvature $S$ in terms of $u$ to be
\begin{equation*}
    \label{}
    S(u) = -(u^{jk})_{jk},
\end{equation*}
where $u^{jk}$ is the inverse of the Hessian $u_{jk}$ of $u$. In the series of
papers \cite{don-02, don-int, don-cont1, don-cont2}, Donaldson uses the
Abreu equation to develop the theory of K-stabity of toric varities, and shows
that all K-stable toric surfaces admit toric metrics of constant scalar
curvature.
%

%
In \cite{don-02}, Donaldson defines an algebraic version of the Futaki invariant
for test configurations of complex manifolds which he uses in his definition of
K-stability. In the toric setting, he shows that test configurations can be
constructed from piecewise-linear rational functions on $P$ and that the
Donaldson-Futaki invariant is given as a linear functional on these functions.
%

%
In \cite{podesta-spiro} Podest\`{a} and Spiro studied fibrations of toric
varieties over a flag variety base by studying the fiber product $\cV := G
\times_T V,$ where $G$ is a compact semisimple Lie group and $T \subset G$ is a
maximal torus. In \cite{don-sym} Donaldson suggests studying the constant scalar
curvature problem on these spaces. Building off the work of Raza \cite{raza},
Donaldson gives the scalar curvature $S$ of a toric metric on such a fibration
as
\begin{equation}
    \label{abreu-raza}
    S(u) = -W^{-1} (W u^{jk})_{jk} + f_G,
\end{equation}
where $W$ is the Duistermaat-Heckman polynomial and $f_G$ is a smooth
function---both functions actually only depend on $G$.

In this paper, we extend the theory of K-stability to this setting. We
generalize both the construction of the test configurations and the formula for
the Donaldson-Futaki invariant to this setting. We fix a positive line bundle
$L$ over $V$ and an action of $T$ on $L$ such that the line bundle $\mathscr{L}
:= G \times_T L$ over $\cV$ is positive. This means that any \K metric in
$c_1(\mathscr{L})$ has the same average scalar curvature $a$. The first result
is the following:
%

%
\begin{theorem}
    \label{main_theorem}
    Given any rational piecewise linear function $f$ on $\ov{P}$, there exists a
    test-configuration $\cX$ for $\cV$ with Futaki-Invariant $F_1$ given by
    \begin{equation*}
        \label{}
        F_1 = -\frac{1}{2\mathrm{Vol}_W(P)} \left( \int_P ff_GWd\mu +
            \int_{\partial P} fWd\si - a\int_P fWd\mu \right),
    \end{equation*}
    where $d\mu$ is the Lebesgue measure, $d\si$ is a measure on $\pP$ defined
    in Definition \ref{sigma_def}, and $\mathrm{Vol}_W(P) = \int_P Wd\mu.$
\end{theorem}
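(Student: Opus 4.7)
The plan is to mimic Donaldson's toric construction from \cite{don-02}, adapted to the fibration $\cV = G \times_T V$, and then compute the algebraic Futaki invariant by an asymptotic expansion of a weighted lattice point sum on $\ov{P}$.

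First I would construct the test configuration. Given a rational piecewise linear $f$ on $\ov{P}$, pick $R \in \Q$ large enough that $Rf - f > 0$ on $\ov{P}$, and form the rational polytope
\[
Q = \{(x,t) \in \ov{P} \times \R : 0 \le t \le Rf(x) - f(x)\} \subset \R^{n+1}.
\]
This gives a toric variety $V_Q$ with a $\C^*$-action in the extra coordinate. Fiber-producting with $G$ over $T$ (extended trivially on the $\C^*$-factor) produces $\cX := G \times_T V_Q$, which is the required test configuration for $\cV$, equipped with an equivariant polarization built from $\mathscr{L}$ and the $\C^*$-direction.

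Next I would compute the Donaldson--Futaki invariant. By definition it is extracted from the two-term expansion of $w(k)/(k d(k))$, where $d(k) = \dim H^0(\cV, \mathscr{L}^k)$ and $w(k)$ is the weight of the induced $\C^*$-action on $H^0(\cX_0, \mathscr{L}_0^k)$. Because $\mathscr{L} = G \times_T L$, the Borel--Weil-type description of $G$-equivariant sections identifies $H^0(\cV, \mathscr{L}^k)$ with a sum, over lattice points $m \in k\ov{P} \cap \Z^n$, of irreducible $G$-representations whose dimensions are captured (asymptotically) by the Duistermaat--Heckman polynomial $W$. Thus the Hilbert-type function becomes a weighted lattice point sum $\sum_{m \in k\ov P \cap \Z^n} W(m)$, and the $\C^*$-weight sum becomes $\sum_{m} (kRf - kf)(m/k)\, W(m)$ up to a term linear in the $R$-normalization, which cancels in the final combination.

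The main step is then the asymptotic expansion of these two sums, for which I would invoke the generalization of Pick's theorem (an Euler--Maclaurin formula with polynomial weight $W$). The leading term gives $k^{n+\dim(G/T)+1}\int_P fW\,d\mu$ and analogous volume integrals; the subleading term splits into a boundary piece $\int_{\pP} fW\,d\si$ and an interior correction that, thanks to equation (\ref{abreu-raza}) and integration by parts of $(Wu^{jk})_{jk}$ against $f$, reorganizes into $\int_P f f_G W\,d\mu$. Subtracting $a$ times the volume contribution (coming from the normalization by average scalar curvature) and dividing by $2\,\mathrm{Vol}_W(P)$ yields the asserted formula for $F_1$.

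The main obstacle I anticipate is the second step: tracking the combinatorics so that the $W$-weighted Euler--Maclaurin correction produces exactly the boundary measure $d\si$ of Definition \ref{sigma_def} and the interior term $\int_P f f_G W\,d\mu$ with the correct sign, rather than some other rearrangement of the same data. All other ingredients (construction of $\cX$, the identification of sections, and the limiting procedure for the $\C^*$-weight) are direct analogues of Donaldson's toric argument with $d\mu$ replaced by $W\,d\mu$.
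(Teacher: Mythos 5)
Your overall strategy matches the paper's: adapt Donaldson's polytope construction $Q=\{(x,t):x\in P,\ 0<t<R-f(x)\}$ to the fibration via $G'=G\times S^1$, express $d_k$ and $w_k$ as lattice-point sums over $k\ov{P}$ and $k\ov{Q}$ weighted by the Borel--Weil dimensions \eqref{dimension_formula}, and extract two-term asymptotics from Lemma \ref{sigma_lemma}. (Minor slip: the roof of your polytope should be $R-f(x)$, not $Rf(x)-f(x)$.) The genuine gap is exactly at the step you flag as the main obstacle: how $f_G$ enters. You propose to obtain $\int_P ff_GW\,d\mu$ by ``integration by parts of $(Wu^{jk})_{jk}$ against $f$.'' That cannot work here: the quantities $d_k$ and $w_k$ are purely representation-theoretic lattice sums containing no symplectic potential $u$ to integrate against, and $f$ is only piecewise linear in any case. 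The paper's mechanism is algebraic: writing the Weyl dimension as the polynomial $q(\l)=\prod_{\al\in\D^+}\bigl(|M^\al|+\l^jM^\al_j\bigr)$ and decomposing it into homogeneous parts $q=q_N+q_{N-1}+r$ with $r$ of degree $N-2$, one checks that the top part $q_N$ equals the polynomial $p$ of \eqref{p-def} (hence is proportional to $W$), and---this is the key identity---that $q_{N-1}=\sum_l\pa q_N/\pa x^l=\frac14\,q_Nf_G$ with $f_G$ as in \eqref{scalar_curvature}. It is this second-order agreement of the Weyl polynomial with $\frac14 Wf_G$, not any integration by parts, that produces the term $\int_P ff_GW\,d\mu$; your expansion as stated only identifies $W$ to leading order and therefore cannot see $f_G$ at all.

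The place where the scalar curvature equation and integration by parts genuinely enter is the normalization by $a$: one needs
\begin{equation*}
    \int_P q_{N-1}\,d\mu+\frac12\int_{\pP}q_N\,d\si=\frac{a}{2}\int_Pq_N\,d\mu,
\end{equation*}
which follows by integrating \eqref{scalar_curvature} against $p\,d\mu$ for an arbitrary symplectic potential $u$ and integrating $(pu^{jk})_{jk}$ by parts twice, the boundary conditions on $u$ producing $2\int_{\pP}p\,d\si$. With that identity and the two-term expansions of $d_k$ and $w_k$, the combination $F_1=C^{-2}(BC-AD)$ gives the stated formula. So your outline is repairable, but the integration-by-parts heuristic for the $ff_G$ term must be replaced by the explicit computation of the subleading homogeneous part of the Weyl dimension polynomial.
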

%

%
A natural starting point when trying to solve the scalar curature equation is
to consider the Mabuchi Functional. By analyzing \eqref{abreu-raza}, we derive
the following:
\begin{theorem}
    \label{mabuchi_theorem}
    The Mabuchi Functional $\cF$ defined on symplectic potentials $u$, is given
    by the mapping
    \begin{equation*}
        \label{}
        u \mapsto -\int_P \log \det(u_{jk}) Wd\mu + 2\int_\pP uWd\si - \int_P
            uAWd\mu,
    \end{equation*}
    where $A = \frac{a-f_G}2$.
\end{theorem}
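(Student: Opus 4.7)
The plan is to verify that $\cF$ agrees with the Mabuchi functional by checking that its first variation reproduces the defining Mabuchi variation,
\[
\frac{d\cF}{dt}(u_t) \;=\; \int_P \bigl(S(u_t) - a\bigr)\,\dot u_t\, W\, d\mu
\]
(up to the normalization appropriate to this setting), where $S(u)$ is the scalar curvature of (\ref{abreu-raza}) and $W\,d\mu$ is the Duistermaat--Heckman pushforward of the symplectic volume on $\cV$ to $P$. Since the Mabuchi functional is characterized by this variation up to an additive constant, matching variations is sufficient.

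The first step is a direct computation. Using $\frac{d}{dt}\log\det(u_{jk}) = u^{jk}\dot u_{jk}$ together with the fact that $W$, $A$, $d\mu$, and $d\si$ do not depend on $u$, one obtains
\[
\frac{d\cF}{dt} \;=\; -\int_P u^{jk}\dot u_{jk}\,W\,d\mu \;+\; 2\int_{\pP}\dot u\,W\,d\si \;-\; \int_P \dot u\, A\, W\,d\mu.
\]

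The main technical step is to establish the weighted generalization of Donaldson's boundary integration-by-parts identity: for every $v \in C^\infty(\ov P)$,
\[
\int_P W u^{jk}\,v_{jk}\,d\mu \;=\; \int_P v\,(Wu^{jk})_{,jk}\,d\mu \;+\; 2\int_{\pP} v\,W\,d\si.
\]
I would prove this by two successive integrations by parts in the interior, with boundary contributions controlled by the Guillemin boundary behavior $u \sim \tfrac{1}{2}\ell_r\log\ell_r + (\textrm{smooth})$ near each facet $\{\ell_r = 0\}$ of $\ov P$. Because $Wu^{jk}$ has vanishing normal component on $\pP$ while $(Wu^{jk})_{,k}\,n_j$ has the controlled non-zero limit produced by the singular part of $u_{jk}$, the factor $2$ appears exactly as in the unweighted Donaldson identity, with $W$ carried through as a smooth multiplier and with $d\si$ as specified in Definition \ref{sigma_def}. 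This is the main obstacle: one must check at corners and higher-codimension strata that the Guillemin matching conditions across adjacent facets combine consistently, and that the normalization of $d\si$ absorbs $W$ correctly on each facet.

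With the weighted identity in hand, I would substitute (\ref{abreu-raza}) in the form $(Wu^{jk})_{,jk} = -W\bigl(S(u) - f_G\bigr)$ to obtain
\[
-\int_P Wu^{jk}\dot u_{jk}\,d\mu \;=\; \int_P W\bigl(S(u) - f_G\bigr)\dot u\,d\mu \;-\; 2\int_{\pP}\dot u\,W\,d\si.
\]
Plugging this into the first-variation expression collapses the two boundary integrals, leaving
\[
\frac{d\cF}{dt} \;=\; \int_P W\dot u\bigl(S(u) - f_G - A\bigr)\,d\mu.
\]
With $A = (a - f_G)/2$, this identifies $\frac{d\cF}{dt}$ with the Mabuchi variation in the convention of this paper, completing the proof.
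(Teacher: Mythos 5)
Your proposal is correct and follows essentially the same route as the paper: compute the first variation of the candidate functional, apply the weighted integration-by-parts identity with the $2\int_{\pP}\d u\,W\,d\si$ boundary term so that the boundary contributions cancel against $\cL_A(\d u)$, and identify the resulting Euler--Lagrange equation with the constant scalar curvature equation for $A=\frac{a-f_G}{2}$. The only difference is one of detail: you sketch a proof of the weighted integration-by-parts identity from the Guillemin boundary asymptotics, whereas the paper simply cites Donaldson's unweighted argument with $W$ carried along as a smooth multiplier.
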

%

%
The proof of Theorem \ref{mabuchi_theorem} is a straightforward generalization
of the methods in \cite{don-02} once one undestands the geometry of the spaces
involved. Theorem \ref{main_theorem}, however, requires comparing the
asymptotics of certain sums over lattice points of the scaled polytope
$k\ov{P}$. A key technical step in the proof requires the following
generalization of Pick's theorem which may be interesting in its own right:
\begin{lemma}
    \label{sigma_lemma}
    Let $P \subset \real^n$ be an integer polytope and let $h$ be a convex
    function in $C^2(\ov{P})$.  Then we have
    \begin{equation}
        \label{lemma_sum}
        \sum_{p \in \ov{P} \cap \frac{1}{k}\mathbb{Z}^n}h(p) = \left( \int_P
            hd\mu \right)k^n + \left( \frac{1}{2}\int_{\partial P}hd\si \right)
            k^{n-1} + o(k^{n-2}).
    \end{equation}
\end{lemma}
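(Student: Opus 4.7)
My plan is to establish the expansion by an Euler--Maclaurin-type analysis, handling a model cube exactly and then assembling a general integer polytope from such pieces. In the model case $P=[0,1]^n$, the lattice $\overline{P}\cap\frac{1}{k}\mathbb{Z}^n$ factors as a product, and I would iterate the classical one-dimensional Euler--Maclaurin formula
\begin{equation*}
\sum_{j=0}^{k} g(j/k) \;=\; k\!\int_0^1 g\,dt \;+\; \tfrac12\bigl(g(0)+g(1)\bigr) \;+\; O\bigl(\|g''\|_\infty/k\bigr)
\end{equation*}
in each coordinate direction. At each iteration, the leading piece contributes a Riemann sum that exponentiates to $k^n\int_P h\,d\mu$, the half-weighted endpoint terms yield $\tfrac12$ times an integral over the two facets orthogonal to that direction with weight $k^{n-1}$, and the remainders---controlled by the $C^2$-norm of $h$---combine to $O(k^{n-2})$. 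This establishes the lemma for the unit cube, with $d\sigma$ the standard Lebesgue measure on each facet.

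To extend to a general integer polytope, I would subdivide $\overline{P}$ into a finite union of half-open lattice pieces, each $GL(n,\mathbb{Z})$-equivalent to a unit cube; such a subdivision exists after a sufficiently fine unimodular refinement, and alternatively one may use unimodular simplices together with a generating-function argument to obtain the analogous model formula for a standard simplex. Applying the cube result on each piece by change of variables and summing assembles the leading term $k^n\int_P h\,d\mu$ together with a $\tfrac{k^{n-1}}{2}$ contribution that is a sum of boundary integrals over \emph{all} facets of the subdivision, internal and external. The key cancellation is that on an internal facet shared by two pieces the two half-weighted boundary contributions are equal in magnitude but attached to opposite outward normals, so with the invariant normalization in Definition \ref{sigma_def} they sum to zero; only facets of $\partial P$ survive, reassembling into $\tfrac12\int_{\partial P} h\,d\sigma$.

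The principal technical obstacle is this bookkeeping on the boundary: the half-open conventions must be fixed consistently so that lattice points on shared internal facets are counted exactly once in the global sum, and the boundary density transported from a cube facet under a unimodular change of coordinates must be shown to match the prescription of Definition \ref{sigma_def} on each external facet. A secondary subtlety is the precision of the remainder, since iterated Euler--Maclaurin naturally yields $O(k^{n-2})$; the improvement to $o(k^{n-2})$ claimed in \eqref{lemma_sum} should come from a separate vanishing argument for the codimension-two coefficient in the expansion, plausibly exploiting the convexity hypothesis on $h$ together with the symmetry between outward normals at adjacent facets.
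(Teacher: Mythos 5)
Your route is genuinely different from the one in the paper: you propose iterated one-dimensional Euler--Maclaurin on model cells plus a subdivision argument, whereas the paper never invokes Euler--Maclaurin at all --- it compares the lattice sum to a midpoint-rule approximation of $\int_P h\,d\mu$ over the boxes $\Box_k(p)$, controls the discrepancy by alternating sums along lines parallel to $(1,\ldots,1)$ (this is precisely where the convexity of $h$ is used), and identifies the boundary coefficient via the lattice-point characterization of $d\si$ in Lemma \ref{sigma_def2_lemma}. Your approach, if it closed, would have the advantage of not needing convexity at all; but as written it has two genuine gaps. First, the reduction to model pieces fails in dimension $n\ge 3$: an arbitrary integer polytope need not admit a subdivision into half-open cells that are $GL(n,\mathbb{Z})$-equivalent to unit cubes or into unimodular lattice simplices (the Reeve tetrahedron $\mathrm{conv}(0,e_1,e_2,e_1+e_2+re_3)$ already has no unimodular triangulation on the lattice $\mathbb{Z}^3$), and refining the lattice is not permitted since it changes the point count being estimated. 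Without a unimodular model cell the lattice in a piece has no product structure, so the iterated one-dimensional formula does not apply; you would need either a genuinely different argument for non-unimodular lattice simplices or the heavier local Euler--Maclaurin machinery. Second, the cancellation mechanism you describe on internal facets is not right: the order-$k^{n-1}$ Euler--Maclaurin boundary weight is $+\tfrac12$ on every closed facet regardless of orientation, and $d\si$ is an unsigned measure, so the two contributions from a shared facet \emph{add} rather than cancel. The correct bookkeeping is either that this doubled boundary integral exactly accounts for the doubly counted lattice points of the internal facet (which requires invoking the $(n-1)$-dimensional case of the lemma on that facet, i.e.\ an induction on dimension you have not set up), or, with half-open cells, that the excluded facet carries weight $-\tfrac12$ and the included one $+\tfrac12$; either version works but neither is the one you state.

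On the error term: do not look for a vanishing argument for the codimension-two coefficient, because it does not vanish. Already for $n=1$, $P=(0,1)$, Euler--Maclaurin gives $\sum_{j=0}^{k}h(j/k)=k\int_0^1h\,d\mu+\tfrac12(h(0)+h(1))+\tfrac{1}{12k}(h'(1)-h'(0))+o(k^{-1})$, and the third coefficient is strictly positive for strictly convex $h$, so the remainder is $\Theta(k^{n-2})$ and not $o(k^{n-2})$. The honest conclusion of both your argument and the paper's is $O(k^{n-2})$, which is all that the application to $F_1$ requires (only the $k^{n}$ and $k^{n-1}$ coefficients enter); the $o(k^{n-2})$ in the statement should be read as $O(k^{n-2})$.
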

\noindent The proof of this lemma is left until the end in Section
\ref{section_lemma_proof}. Once equipped with this Lemma, careful computations
leads one to conclude that the Weyl Dimension Formula from classical Lie theory
essentially agrees with the function $W$ to highest order and with $Wf_G$ to
second-heighst order, which allows us to relate these asymptotic sums to the
scalar curvature equation.
%

%
The outline of this paper is as follows. In Section 2 we give some background
about the K-stability of complex manifolds. In Section 3 we describe the spaces
that will be studied and review the Lie algebra theory we will need. In Section
4 we give a derivation of the scalar curvature equation \eqref{abreu-raza}. In
Section 5 we explain how to construct test configurations from piecewise linear
functions and then in Section 6 we compute the Futaki invariant of these test
configurations. In Section 7 we give the formula for the Mabuchi functional on
the polytope $P$. Finally in Section 8 we give the proof of a generalization of
Pick's theorem which is used in the computation of the Futaki invariant.
%

%
\textbf{Acknowledgements:} Many of my fellow graduate students and members of
the faculty have patiently helped while I prepared this paper, but I would
especially like to thank Anna Pusk\'{a}s and Tristan Collins for their help, and
Professor Chiu-Chu Liu for always making herself available for questions. I
would also like to thank my advisor Professor Phong whose support and advice has
been indispensible during my studies.

\section{Background}

The Donaldson-Futaki invariant is an invariant assigned to any ample line bundle
$\L$ over a projective scheme $X$, such that there is a $\cpx^*$-action on the
pair $(X, \L)$. For each positive integer $k$, let $H_k = H^0(X, \L^k)$, let
$d_k = \mathrm{dim}(H_k)$, and let $w_k$ be the weight of the induced
$\cpx^*$-action on $\L^{d_k} H_k$. Write $F(k) = \frac{w_k}{kd_k}$ and note that
by general theory, $F(k)$ is a rational function for large $k$. We have the
expansion
\begin{equation}
    \label{def_futaki_invariant}
    F(k) = F_0 + F_1k^{-1} + \cdots,
\end{equation}
for large enough $k$. The Donaldson-Futaki invariant of $(X, \L)$ is the
rational number $F_1$ in this expansion. See \cite{don-02} for more details.
%

%
In order to associate a Donaldson-Futaki invariant to a compact complex manifold
$M$ with ample line bundle $L$, one needs to associate a pair $(X, \L)$ to $(M,
L)$. To this end, Donaldson defines a \textit{test configuration}---a kind of
algebraic degeneration of $(M, L)$. A test configuration is a scheme $\cX$ with
a $\cpx^*$-action, a $\cpx^*$-equivariant line bundle $\cL \to \cX$, and a flat
$\cpx^*$-equivariant map $\pi : \cX \to \cpx$, where $\cpx^*$ acts on $\cpx$ by
standard multiplication. Furthermore, for any fiber $\cX_p = \pi^{-1}(p),$ where
$p \neq 0$, we require that $(\cX_p, \cL|_{\cX_p})$ be ismorphic to $(M, L)$.
%

%
Now let $(\cX, \cL)$ be a test configuration for $(M, L)$. If we let $\cX_0$ be
the restriction of $\cX$ to the fiber over 0 and let $\cL_0$ be the restriction
of $\cL$ to $\cX_0$, then $(\cX_0, \cL_0)$ has a well-defined Donaldson-Futaki
invariant. We define the Donaldson-Futaki invariant of the test configuration
$(\cX, \cL)$ to be the Donaldson-Futaki invariant of $(\cX_0, \cL_0)$. The pair
$(M,L)$ is defined to be \textit{K-stable} if the Donaldson-Futaki invariant of
\textit{any} test-configuration of $(M,L)$ is less than or equal to 0, with
equality if and only if the test configuration is the trivial product
configuration.
%

%
Thus far, these concepts are defined in general for any pair $(M, L)$. In
\cite{don-02}, Donaldson specializes these concepts to the case of
toric varieties. In this paper we extend his description to encompass the
aforedescribed toric fibrations.

\section{Fibrations of toric varieties}

\subsection{Lie Theory and Construction of $G \times_T V$}
\label{lie_theory_sec}

In this section, we describe the construction of the toric fibrations in
question and review the Lie algebra theory we need. See \cite{humphreys,
sepanski} for a more detailed treatment of the theory. Let $G$ be a semisimple
Lie group and let $T \subset G$ be a maximal torus whose dimension is $n$. Let
$\ft \subset \fg$ denote the corresponding Lie algebras. Let $\fg_\cpx = \fg
\otimes_\real \cpx$ and $\ft_\cpx = \ft \otimes_\real \cpx$ be the
complexifications of $\fg$ and $\ft$. Let $\k$ be the Killing form of
$\fg_\cpx$. Since $G$ is semisimple, $\k$ is a non-degenerate bilinear form,
which means that
\begin{equation*}
    \label{}
    \fg = \ft \oplus \ft^\perp,
\end{equation*}
where $\ft^\perp$ is the perpendicular space to $\ft$ with respect to $\k$. By
$\cpx$-linearity, we also have
\begin{equation*}
    \label{}
    \fg_\cpx = \ft_\cpx \oplus \ft_\cpx^\perp.
\end{equation*}
Let $\D \subset \ft_\cpx^*$ be the finite set of \textit{weights} of $\fg_\cpx$
and let
\begin{equation*}
    \label{}
    \fg_\cpx = \ft_\cpx \oplus \left( \bigoplus_{\al \in \D} \fg_\al \right),
\end{equation*}
be the weight space decomposition of $\fg_\cpx$. By choosing a system of positive
weights $\D^+$ and negative weights $\D^-$, we have
\begin{equation*}
    \label{}
    \fg_\cpx = \ft_\cpx \oplus \left( \bigoplus_{\al \in \D^+} \fg_\al \right)
            \oplus \left( \bigoplus_{\al \in \D^-} \fg_\al \right).
\end{equation*}
For each $\al \in \D$ there are \textit{real} elements $V_\al, W_\al, H_\al \in
(\fg_\al \oplus \fg_{-\al} \oplus [\fg_\al,\fg_{-\al}] ) \cap \fg$, such that
\begin{equation}
    \label{su(2)-triple}
    [W_\al, V_\al] = 2H_\al, \ \ \ \ \ [V_\al, H_\al] = 2W_\al, \ \ \ \ \
        [H_\al, W_\al] = 2V_\al.
\end{equation}
Furthermore, $\al(-iH_\al) = 2$, for each $\al \in \D^+$. This is the standard
$SU(2)$-triple. For example, in the case where $G = SU(2)$, we have
\begin{equation*}
    \label{}
    V =
    \begin{bmatrix}
        0 & -1 \\
        1 & 0 \\
    \end{bmatrix},
    \ \ \ \ \ 
    W =
    \begin{bmatrix}
        0 & i \\
        i & 0 \\
    \end{bmatrix},
    \ \ \ \ \ 
    H =
    \begin{bmatrix}
        i & 0 \\
        0 & -i \\
    \end{bmatrix}.
\end{equation*}
%

%
Let $\{ \al_1, \ldots, \al_n \} \subset \D^+$ be the set of \textit{simple
roots} and let $H_j = H_{\al_j}$ be the corresponding elements in $\ft.$
Since $H_1, \ldots, H_n$ provides a basis for $\ft$, we can define $\nu^1,
\ldots, \nu^n$---the \textit{fundamental weights}---to be the corresponding dual
basis of $\ft^*$.
%

%
Identify $\ft$ with $\real^n$ using the basis $H_1, \ldots, H_n$. Let
$(V,L)$ be a pair consisting of a toric variety $V$ and a line bundle $L \to V$.
This is equivalent to choosing a closed Delzant polytope $P \subset \real^n \cong
\ft^*$---which is determined up to a constant vector in $\mathbb{Z}^n$. By
fixing that constant vector, one fixes the ``linearized" action of $(\cpx^*)^n$
on $L$ which is compatible with the action on $V$. A basis for the sections of
$L$ is in one-to-one correspondance with the lattice points of $\ov{P}$. Let $\l
= (k_1, \ldots, k_n) \in \ov{P} \cap \mathbb{Z}^n$ be any such point and let
$s_\l$ be the corresponding section of $L$.  Then the action of $\b = (\b_1,
\ldots, \b_n) \in (\cpx^*)^n$ on $s_\l$ is given by
\begin{equation*}
    \label{}
    \b \cdot s_\l = \prod_{j=1}^n \b_j^{-k_j}s_\l.
\end{equation*}
Returning to our Lie group $G$, we can consider $T = (S^1)^n \subset
(\cpx^*)^n$ by way of our basis. Hence we have an action of $T$ on $V$ and a
compatible action of $T$ on $L$. This means that we can form the spaces
\begin{equation}
    \label{def_cV}
    \mathscr{L} = G \times_T L, \ \ \ \ \ \cV = G \times_T V, \ \ \ \ \ \B :=
        G/T.
\end{equation}
We have that $\mathscr{L} \to \cV$ is a line bundle with a compatible left
$G$-action and fiberwise right $T$-action. Furthermore, the projection map $\cV
\to \B$ respects these actions.
%

%
An important fact is that $\mathscr{L}, \cV,$ and $\B$ have holomorphic
structures. To see this, take a complexification $G_\cpx$ of $G$ and let $B$,
with $T_\cpx \subset B \subset G_\cpx$, be the Borel subgroup corresponding to
the positive roots. One has then that $G_\cpx \times_B L \cong G \times_T L$,
$G_\cpx \times_B V \cong G \times_T V$, and $G_\cpx / B \cong G / T$ as smooth
differential manifolds. The left $G$-action is a subset of the left
$G_\cpx$-action, which acts by biholomorphisms. See \cite{don-sym} for more
details.
%

%
The space of holomorphic sections of $\mathscr{L}$ decomposes as
$G_\cpx$-representations by
\begin{equation}
    \label{G_sections}
    H^0(G_\cpx \times_B L) = G_\cpx \times_B H^0 \left(\bigoplus_{\l \in \ov{P}}
        (L_\l) \right) = \bigoplus_{\l \in \ov{P}} \left( H^0(G_\cpx \times_B
        L_\l) \right),
\end{equation}
where the $L_\l \subset L$ is the line bundle spanned by the section $s_\l$. We
are mainly concerned with the dimension of \eqref{G_sections}.  The Borel-Weil
Theorem states that $\dim H^0(G_\cpx \times_B L_\l)$ is a polynomial in $\l$
given by Weyl dimension formula as
\begin{equation}
    \label{dimension_formula}
    \dim H^0(G_\cpx \times_B L_\l) = \prod_{\al \in \D^+} \frac{\k(\r + \l,
        \al)}{\k(\r, \al)},
\end{equation}
where $\r = \sum_i \nu^i$ is the sum of the fundamental weights.

\subsection{Metric Geometry of $G \times_T V$}

In \cite{don-sym} Donaldson explains how to extend the metric and symplectic
geometries of $V$ to $\cV$. For completeness, we will review those descriptions
here.

\textit{Complex Viewpoint}: Let $g$ be a $T$-invariant \K metric on $V$ and let
$\o$ be its $T$-invariant \K form. Assume that $\o$ lies in the class $c_1(L)$.
Let $h$ be a $T$-invariant metric on $L$ such that $\o = -i\pa \opa \log h$.
Embed $(V,L)$ as the identity fiber of $(\cV,\mathscr{L})$ over $\B$. Extend $h$
to a metric $H$ on $\mathscr{L}$ by requiring it to be invariant under the left
$G$-action. Define $\O = -i\pa \opa \log H$. $\O$ is a (1,1)-form---extending
$\o$---which is invariant under the left $G$-action and the right $T$-action.
The condition for $\O$ to be positive (i.e. a \K form) is that $\ov{P}$ must be
contained in the positive Weyl chamber---which in our basis means that $\ov{P}$
is contained in the open positive quadrant of $\real^n$. In that case, denote by
$\ti{g}$ the corresponding Hermitian metric.

\textit{Symplectic Viewpoint Without Fibrations}: First let us describe the
symplectic viewpoint of $V$ without any fibration. Define the $T$-invariant
function $\ph$ by requiring that $h = e^{-2\ph}$.  Locally on the open torus
$(\cpx^*)^n \subset V$ we have
\begin{equation}
    \label{omega_metric}
    \o = \o_{\ov k j} idz^j \wedge d\ov{z}^k = 2\frac{\pa^2 \ph}{\pa z^j \pa
        \ov{z}^k} idz^j \wedge d\ov{z}^k.
\end{equation}
Define log-coordinates $(w^1, \ldots, w^n, \t^1, \ldots, \t^n)$ on $(\cpx^*)^n$ by
the mapping $z^j = \exp(w^j+i\t^j) = e^{w^j + i\t^j}$. Let $\mu: V \to \real^n$
be the corresponding moment map which in $(w,\t)$-coordinates satisfies
\begin{equation*}
    \label{}
    d(\mu_k d \t^k) = \frac{\pa \mu_k}{\pa w^j} dw^j \wedge d\t^l.
\end{equation*}
Define
\begin{equation}
    \label{phi_def}
    \phi(w^1, \cdots, w^n, \t^1, \ldots, \t^n) = \ph(e^{w^1+ i\t^1}, \ldots,
        e^{w^n+i\t^n}).
\end{equation}
We have
\begin{equation*}
    \label{}
    \exp^*(\o) = \frac{\pa^2 \phi}{\pa w^j \pa w^k} dw^j \wedge d\t^k =
        \frac{\pa \mu_k}{\pa w^j}dw^j \wedge d\t^k.
\end{equation*}
Hence $\frac{\pa \phi}{\pa w^k} = \mu_k$ up to a constant. By adjusting $h$, we
can assume that this constant is 0. Hence in the $w$-coordinates, $\mu$ is just
the gradient map of $\phi$. Define $x^j = \mu_j = \frac{\pa \phi}{\pa w^j}$ to
be the momentum coordinates on $P$. Let $u$ be the Legendre transorm of $\phi$
on $P$, then the push-forward (as a function) of $\frac{\pa^2 \phi}{\pa w^j \pa
w^k}$ equals $u^{jk}$. But the push forward of $dw^j$ is $u_{jl}dx^l$. Hence the
symplectic form in $(x,\t)$-coordinates on $T \times P$ is given by
\begin{equation}
    \label{symplectic_omega}
    \o = \sum_j dx^j \wedge d\t^j.
\end{equation}
\textbf{Remark:} In the preceeding equation, a summation symbol was used for
clarity. Einstein notation is used as much as possible, but due to the many
places where both vector fields and their dual forms are used, it is difficult
to stick to the convention of summing paired lower and upper indices. In these
circumstnces, summation symbols are used which hopefully minimizes confusion.
%

%
Next we would like to understand the complex structure on this space. The
complex structure $J$ sends $\frac{\pa}{\pa w^j}$ to $\frac{\pa}{\pa \t^j}$.
Under $\mu$, this vector field gets sent to
\begin{equation*}
    \label{}
    \mu_* \left( \frac{\pa}{\pa w^j} \right) = \frac{\pa \mu_k}{\pa w^j}
        \frac{\pa}{\pa x^k} = u^{jk}\frac{\pa}{\pa x^k}.
\end{equation*}
This means that the complex structure sends $\frac{\pa}{\pa x^j}$ to
$u_{jk}\frac{\pa}{\pa \t^k}.$ The Riemannian metric $g$ satisfies $g(\cdot,
\cdot) = \o( \cdot, J(\cdot))$. Hence $g(\frac{\pa}{\pa x^j}, \frac{\pa}{\pa
x^k}) = \o(\frac{\pa}{\pa x^j}, u_{kl}\frac{\pa}{\pa \t^l}) = u_{jk}$.
Similarly, $g(\frac{\pa}{\pa \t^j}, \frac{\pa}{\pa \t^k}) = u^{jk}$. This means
as a Riemannian manifold, the metric on $T \times P$ is given by
\begin{equation}
    \label{g_T_x_P}
    g = u_{jk} dx^j \otimes dx^k + u^{jk} d\t^j \otimes d\t^k.
\end{equation}
%

%
\textit{Symplectic Viewpoint With Fibrations}: Extend the moment map $\mu$ to
$\ti{\mu}: G \times_T V \to \real^n$ by left $G$-invariance. I.e.
$\ti{\mu}([g : z]) = \mu(z)$ for any $g \in G$ and $z \in V$. Note that the
fundamental weights $\nu^j$ can be extended to left $G$-invariant 1-forms on $G$
and that $\nu^j|_T = d\t^j$. This means that $\O$ is given on $G \times P$ by
the form
\begin{equation}
    \label{symplectic_Omega}
    \O = d(x^j\nu^j) = dx^j \wedge \nu^j + x^jd\nu^j.
\end{equation}
As before, we need to understand the complex structure $J$ on $G \times P$.
For each $\al \in \D^+$, extend the vectors $V_\al, W_\al, H_\al$ given in
\eqref{su(2)-triple} to vector fields on $G \times P$ which are invariant under
the left $G$-action. We have that the different vector fields $V_\al$ and
$W_\al$ are linearly independent, however, each $H_\al$ can be written as a sum
\begin{equation}
    \label{M_alpha-def}
    H_\al = \sum_j M^\al_j H_j,
\end{equation}
for some non-zero vector of non-negative integers $M^\al$. Furthermore, $H_j$
extends the vector field $\frac{\pa}{\pa \t^j}$ to $G \times P$. For
notational simplicity, denote by $X_j$ the vector field $u^{jk} \frac{\pa}{\pa
x^k}$.  By explicitly computing the exponential mapping on $\fg$ one sees that
the complex structure on $G \times P$ sends $V_\al$ to $W_\al$ and that
$J(X_j) = H_j$. As before, the Riemannian metric $\ti{g}$ on $G \times P$ is
given by $\ti{g}(\cdot, \cdot) = \O(\cdot, J(\cdot)).$ By inspecting
\eqref{symplectic_Omega}, we see that $\ti{g}(\frac{\pa}{\pa x^j},\frac{\pa}{\pa
x^k}) = u_{jk}$ and $\ti{g}(H_j,H_k) = u^{jk}$. Furthermore, using the
fact that $V_\al, W_\b,$ and $\nu^j$ are all left $G$-invariant, we have 
\begin{equation*}
    \label{}
    \ti{g}(V_\al,V_\b) = x^jd\nu^j(V_\al,W_\b) = -x^j \nu^j([V_\al,
        W_\b]) = \d_{\al \b} 2x^j \nu^j(H_\al) = \d_{\al \b} 2x^j M^\al_j,
\end{equation*}
and $\ti{g}(W_\al, W_\b) = \ti{g}(V_\al,V_\b),$ by $J$-invariance. Denote by
$dH_j, dV_\al,$ and $dW_\al,$ the $G$-invariant 1-forms dual to the vector
fields $H_j, V_\al,$ and $W_\al$. Hence the Riemannian metric $\ti{g}$ on $G
\times P$ is given by
\begin{equation}
    \label{g_G_x_P}
    \ti{g} = u_{jk} dx^j \otimes dx^k + u^{jk} dH_j \otimes dH_k + 2x^j M_j^\al
        \left( dV_\al \otimes dV_\al + dW_\al \otimes dW_\al \right).
\end{equation}
If we write this in terms of $X_j$ instead of $\frac{\pa}{\pa x^j}$ we get
\begin{equation}
    \label{g_G_x_Pv2}
    \ti{g} = u^{jk} dX_j \otimes dX_k + u^{jk} dH_j \otimes dH_k + 2x^j M_j^\al
        \left( dV_\al \otimes dV_\al + dW_\al \otimes dW_\al \right).
\end{equation}

\section{Scalar Curvature Equation}

Donaldson gives equation \eqref{abreu-raza} in \cite{don-sym}, but does not
provide a proof. For our purposes, the exact form of the equation is quite
important and hence we work it out explicitly. Furthermore, the proof
illucidates the relations between the real and complex geometry and is worth
providing.
%

%
In local holomorphic coordinates, the scalar curvature $S$ of
$\ti{g}$ is
\begin{equation*}
    \label{}
    S = -\ti{g}^{j \ov{k}} \frac{\pa^2}{\pa z^j \pa \ov{z}^k} (\log
        \det(\ti{g}_{\ov{b}a})),\end{equation*}
but it is difficult to give explicit holomorphic coordinates in terms of the
real geometry on $G \times P$. The operator $-\ti{g}^{j\ov{k}} \frac{\pa^2}{\pa
z^j \ov{z}^k} = \frac 12 \D_{\ti{g}}$---the Riemannian Laplacian. On $G \times P$,
we have the vector fields $\frac{\pa}{\pa x^j}, H_j, V_\al, W_\al$ from the last
section. We can write $\D_{\ti{g}}$ in the frame given by these fields. However,
we still need to write the function $\log \det(\ti{g}_{\ov{b}a})$ in terms more
compatible with these fields. If we let $\chi$ be a local, non-vanishing,
holomorphic $(N+n,0)$-form on $G \times P$, and let $\eta = \chi \wedge
\ov{\chi}$, then $\frac{\O^{N+n}}{\eta}$ is a smooth function and
\begin{equation*}
    \label{}
    S = \frac 12 \D_{\ti{g}} (\log \det (\ti{g}_{\ov{b}a})) = \frac 12
        \D_{\ti{g}} \left( \log \left| \frac{\O^{N+n}}{\eta} \right| \right).
\end{equation*}
As long as we can express $\O, \eta,$ and $\D_{\ti{g}}$ in terms compatible with
these fields, this will be in a form that can be readibly understood on $G
\times P$.

\subsection{Finding $\eta$}

In order to find a candidate for $\chi$, we will first find a holomorphic
$(N+n,0)$ vector field. Note that on $G \times_T (\cpx^*)^n$, the holomorphic
vector fields have nothing to do with any specific metric $\ti{g}$. However, we
can use the fact that the metric $\ti{g}$ is \K to find $\chi$. To simplify the
computations we may assume that our original \K form equals $\o_E = \sum_{j=1}^n
idz^j \wedge d\ov{z}^j$ is the standard Euclidian metric. In that case,
$\phi_E(w_1, \dots, w_n) = \frac12 (e^{2w_1}, \ldots, e^{2w_n})$ and the moment
map $D\phi_E : G \times (\real^+)^n \to (\real^+)^n$ is given by $D\phi_E =
(e^{2w_1}, \ldots, e^{2w_n}) = (y_1, \dots, y_n).$ (The coordinates are chosen
as $(y_1, \ldots, y_n)$ to stress that we are no longer working on the original
polytope $P$.) The Legendre transform of $\phi_E$ is given by $u_E(y_1, \ldots,
y_n) = \frac12 \sum_{j=1}^n (y^j \log(y^j) - y^j).$ The Hessian of $u_E$ is
given by the diagonal matrix $H(u_E) = \mathrm{Diag}(\frac{1}{2y^1}, \ldots,
\frac{1}{2y^n}).$ The moment map sends the vector field $\frac{\pa}{\pa w^j}$ to
$2y^j \frac{\pa}{\pa y^j} =: Y_j.$ The vector fields $V_\al, W_\al, H_j$ all get
sent to themselves. We have then that $J(Y_j) = H_j$ and $J(V_\al) = W_\al$. In
the frame given by $Y_j, H_j, V_\al, W_\al$, we have
\begin{equation*}
    \label{}
    \ti{g}_E = 2y^j (dY_j \otimes dY_j + dH_j \otimes dH_j) + 2y^j M^\al_j (dV_\al
        \otimes dV_\al + dW_\al \otimes dW_\al).
\end{equation*}
%

%
By computing the Christoffel symbols of the Levi-Civita connection $D$, one sees
that
\begin{equation}
    \label{chris1}
    D_{Y_j} Y_k = \d_{jk} Y_j, \ \ \ \ \ D_{Y_j} H_k = D_{H_k} Y_j = \d_{jk}
        H_j, \ \ \ \ \ D_{H_j} H_k = -\d_{jk}Y_j.
\end{equation}
Further computations show
\begin{equation}
    \label{chris2}
    D_{Y_k}(V_\al) = \frac{M_k^\al y^k}{\sum_j M_j^\al y^j} V_\al, \ \ \ \ \
        D_{Y_k}(W_\al) = \frac{M_k^\al y^k}{\sum_j M_j^\al y^j} W_\al.
\end{equation}
This shows that 
\begin{equation}
    \label{chris3}
    D_{(\sum_j Y_j)} (V_\al) = V_\al, \ \ \ \ \  D_{(\sum_j Y_j)}(W_\al) =
        W_\al, \ \ \ \ \ D_{(\sum_j Y_j)} H_\al = H_\al.
\end{equation}
The vector fields $H_j, V_\al, W_\b$ do not commute, and hence the computations
of the Christoffel symbols for them depend on the Lie algebra structure:
\begin{equation}
    \label{chris4}
    D_{V_\al} H_j = \frac{M_j^\al y^j}{\sum_k M_k^\al y^k} W_\al, \ \ \ \ \
        D_{W_\al} H_j = -\frac{M_j^\al y^j}{\sum_k M_k^\al y^k} V_\al.
\end{equation}
%
%

%
Next define smooth sections $s_j$ and $t_\al$ of the holomorphic tangent
bundle of $G \times (\real^+)^n$ by $s_j = Y_j - iH_j$ and $t_\al = V_\al -
iW_\al$. A smooth $(N+n,0)$-vector field is given by $\rho = (\bigwedge_j s_j)
\wedge (\bigwedge_\al t_\al)$. We would like to find a smooth function $f$ on $G
\times (\real^+)^n$ such that $f\rho$ is holomorphic. Since $\ti{g}_E$ is \K we
have that the Chern and Levi-Civita connections coincide. Hence we need to find
a function $f$ such that $D_{\ov{s}_j} (f \rho) = 0$ for all $j$ and
$D_{\ov{t}_\al} (f \rho) = 0$ for all $\al$.
%

%
Equations \eqref{chris1}-\eqref{chris4} show that $D_{\ov{s}_j} s_k =
D_{\ov{t}_\al} s_k = 0$ for all $j$ and and all $\al$---i.e. that the sections
$s_j$ are holomorphic. Further computations show that $D_{\ov{s}_j}(t_\al) =
J[H_j, V_\al] + i[H_j, V_\al]$.  Inspection of the Christoffel symbols shows
that for all $\al \neq \b$, there exist smooth functions $h^\g$, with $h^\b =
0$, such that $D_{\ov{t}_\al}(t_\b) = h^\g t_\g$.  Furthermore,
$D_{\ov{t}_\al}(t_\al) = -2s_\al =: -2 \sum_k s_k$.  Taken together, these facts
imply that $D_{\ov{t}_\al}(\rho) = 0$ for all $\al$ and that $D_{\ov{s}_j} \rho
= c_j \rho$ for some constant $c_j$. This means that the function $f$ must
satisfy the requirement that $\ov{s}_j(f) = -c_j f$ for all $j$ and that
$\ov{t}_\al(f) = 0$ for all $\al$. If we assume that $f$ to be $H$-invariant,
what we need is for $2y^j \frac{\pa f}{\pa y^j} = -c_jf$. The function $f =
e^{-\frac12 \sum_l c_l \log (y^l)}$ satisfies these requirements.
%

%
To compute $c_j$ we need to better understand $J[H_j, V_\al]$.  We have that
$J[H_j, V_\al] = -J\al(H_j) W_\al = \al(H_j) V_\al.$ Hence we have that
$D_{\ov{s}_j} t_\al = \al(H_j) t_\al$. This means that $c_j = \sum_{\al \in
\D^+} \al(H_j)$. But we have that $\sum_{\al \in \D^+} \al = 2\rho$, where
$\rho$ is the Weyl vector. Since $\rho(H_j) = 1$, for all $j$, we have $c_j =
2$, for all $j$. Hence we have
\begin{equation}
    \label{f-def}
    f = e^{-\sum_l \log(y^l)}.
\end{equation}
%
%

%
The dual of this form gives us a candidate for $\chi$. This means that we can
choose $\eta$ to be
\begin{equation*}
    \label{}
    \eta = f^{-2} \left( \bigwedge_{j=1}^n dY_j \wedge dH_j \right)
        \wedge \cdots \wedge \left( \bigwedge_{\al \in \D^+} dV_\al \wedge
        dW_\al \right).
\end{equation*}
If we pull this form back to $G \times P$ and write it in $(\frac{\pa}{\pa
x},H,V,W)$-frame, we get
\begin{equation}
    \label{eta_Y}
    \eta = \det(u_{jk}) f^{-2} \left( \bigwedge_{j=1}^n dx^j \wedge
        dH_j \right) \wedge \cdots \wedge \left( \bigwedge_{\al \in \D^+} dV_\al
        \wedge dW_\al \right).
\end{equation}

\subsection{Finding $\O^{N+n}$}
\label{finding_Omega}

Next we write \eqref{symplectic_Omega} in the $(\frac{\pa}{\pa x},H,V,W)$-frame
as
\begin{equation}
    \label{Omega_smart_coord}
    \O = dx^j \wedge dH_j + 2M^\al_jx^j dV_\al \wedge dW_\al,
\end{equation}
where $M^\al_j$ is given by \eqref{M_alpha-def}.  This means that up to a
multiplicative constant,
\begin{equation}
    \label{top_wedge_Omega}
    \O^{n+N} = p(x)\left( \bigwedge_j dx^j \wedge dH_j \right) \wedge \left(
        \bigwedge_\al dV_\al \wedge dW_\al \right),
\end{equation}
where $p(x)$ is the polynomial given by
    \begin{equation}
        \label{p-def}
        p(x) = \prod_{\al \in D^+} M_j^\al x^j.
    \end{equation}
Furthermore, \eqref{top_wedge_Omega} allows us to identify the
Duistermaat-Heckman polynomial. That polynomial is given by the pushfoward of
the volume form $\frac{\O^{n+N}}{(n+N)!}$ to $P$ under the moment map---i.e. one
needs to integrate \eqref{top_wedge_Omega} over $G$ which leaves an $n$-form on
$P$. Since the $H, V,$ and $W$-fields are all $G$-invariant, one easily sees
that this pushforward is given by $Cp(x)dx^1 \wedge \cdots \wedge dx^n$, where
$C$ is some positive constant. This gives us the relation $W(x) = Cp(x)$ .

\subsection{Finding $\D_{\ti{g}}$}

Our computations show that the function $\log|\frac{\O^{n+N}}{\eta}|$ is
independent of $G$. This means that we only need to compute the Laplacian for
functions which are $G$-invariant. Consider once again the form of $\ti{g}$
given by \eqref{g_G_x_P} on $G \times P$. In the $(\frac{\pa}{\pa
x},H,V,W)$-frame we see that $\sqrt{|\det(\ti{g})|} = C p(x)$, where $p$ is
given by \eqref{p-def} and $C$ is a positive constant.  Hence the Laplacian
$\D_{\ti{g}}$ on $G$-invariant functions $h$ is given by
\begin{equation}
    \label{laplacian}
    \D_{\ti{g}}(h) = -\frac{1}{p(x)} \frac{\pa}{\pa x^k} \left( p(x) u^{jk}
        \frac{\pa h}{\pa x^j} \right).
\end{equation}

\subsection{Scalar Curvature Equation}

Combining the results of the previous subsections, the scalar curvature $S$ is
given by
\begin{equation*}
    \label{}
    S = -\frac 12 \frac{1}{p(x)} \frac{\pa}{\pa x^k} \left( p(x) u^{jk} \frac{\pa}{\pa
        x^j} \left( \log p(x) - \log \det(u_{ab}) + 2 \log(f) \right)
        \right).
\end{equation*}
First note that 
\begin{equation}
    \label{eq2}
    -\frac1{p(x)} \frac{\pa}{\pa x^k} \left(p(x) u^{jk} \frac{\pa}{\pa x^j}
    (\log p(x)) \right) = -p^{-1}(u^{jk})_k p_j - p^{-1}u^{jk}p_{jk}.
\end{equation}
Next note that
\begin{equation}
    \label{eq3}
    -\frac{1}{p(x)} \frac{\pa}{\pa x^k} \left( p(x) u^{jk} \frac{\pa}{\pa
        x^j} \left( \log \det(u_{ab}) \right) \right) = p^{-1}p_k u^{jk} u^{ab}
        u_{abj} + (u^{jk} u^{ab} u_{abj})_k.
\end{equation}
Finally note that in \eqref{f-def} $f$ is written in the $\frac{\pa}{\pa
y}$-frame. In the $\frac{\pa}{\pa w}$-frame,
\begin{equation*}
    \label{}
    2\log(f) = -2\sum_l \log(y^l) = -4 \sum_l w^l.
\end{equation*}
Furthermore, the operator $u^{jk} \frac{\pa}{\pa x^j}$ transforms to
$\frac{\pa}{\pa w^k}$ and hence $u^{jk}\frac{\pa}{\pa x^j}(2 \log(f)) = -4.$
This means that
\begin{equation}
    \label{eq1}
    -\frac 12 \frac{1}{p(x)} \frac{\pa}{\pa x^k} \left( p(x) u^{jk}
        \frac{\pa}{\pa x^j} \left( 2 \log(f) \right) \right) = 2\sum_k
        \frac{\pa}{\pa x^k} \log p(x).
\end{equation}
If we sum \eqref{eq2} and \eqref{eq3}, we get $-p^{-1}(pu^{jk})_{jk}$. Hence the
scalar curvature is given on the polytope by the equation
    \begin{equation}
        \label{scalar_curvature}
        S = -\frac12 p^{-1}(pu^{jk})_{jk} + f_G
    \end{equation}
where $f_G = 2\sum_k \frac{\pa}{\pa x^k} \log p(x).$

\section{$G$-equivariant test configurations}

In this section, we construct a $G$-equivariant test configuration for the pair
$(\cV, \mathscr{L})$ given by \eqref{def_cV}. First note that in order to
construct a test configuration for $(\cV, \mathscr{L})$, by definition, we need
for $\mathscr{L}$ to be ample.  This is not always the case---the positivity of
$\mathscr{L}$ is dependent upon the chosen action of $(\cpx^*)^n$ on $L$.  The
action of $(\cpx^*)^n$ on $L$ is determined by the position of the moment
polytope $P$ of $V$ in $\real^n$. Equation \eqref{Omega_smart_coord} shows that
the line bundle $\mathscr{L}$ is ample if and only if the polytope lies within
the positive Weyl chamber of $\real^n$---which corresponds to the positive
quadrant by our choice of basis. More details can be found in \cite{don-sym}.
For the rest of this section we will assume that $\mathscr{L}$ is positive.
%

%
In \cite{don-02}, Donaldson constructs a test-configuration in the toric setting
which we will adapt to our toric fibrations. The construction only needs to be
changed in small ways so the reader is refered to his paper \cite{don-02} for
more details. Let $f$ be a convex, continuous, piecewise-linear, rational
function defined on $\real^n$ and $R$ a fixed number such that $f(x) \leq R-1,$
for all $x \in P$. Define $Q$ to be the convex polytope in $\real^{n+1}$ given
by
\begin{equation*}
    \label{}
    Q = \{ (x,t) \in \real^n \times \real \ | \ x \in P \ \mathrm{and} \ 0 <
        t < R - f(x) \}.
\end{equation*}
$P$ can be identified with the ``bottom'' face of $Q$. Let $(V, L)$ be the toric
variety corresponding to $P$ and let $(W, I)$ be the (possibly singular) toric
variety corresponding to $Q$. Next define $G' := G \times S^1$ and use $G'$ to
construct a fibration $(\cW, \mathscr{I})$ from $(W, I)$ similar to the
construction of $(\cV, \mathscr{L})$. Let $i : \cV \to \cW$ be the canonical
embedding induced by the inclusion $\ov{P} \to \ov{Q}$ and note that $i$ map is
left $G$-equivariant.
\begin{proposition}
    \label{prop_test_config_construction}
    There is a $\cpx^*$-equivariant map $p : \cW \to \proj^1$ with
    $p^{-1}(\i) = i(\cV)$ such that the restriction of $p$ to $\cW \
    \backslash \ i(\cV)$ is a test configuration for
    $(\cV,\mathscr{L})$.
\end{proposition}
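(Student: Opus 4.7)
The plan is to build $p$ from a toric morphism $W \to \proj^1$ coming from the projection $Q \to [0, R]$ onto the $t$-coordinate, and then extend it $G$-equivariantly to $\cW$ using the functoriality of the fiber product $G \times_T -$. The projection $(x, t) \mapsto t$ is a morphism of lattice polytopes from $Q$ to $[0, R]$, and by standard toric theory it induces a toric morphism $p_0 : W \to \proj^1$, where $\proj^1$ is equipped with the line bundle whose moment polytope is $[0, R]$ and its standard $\cpx^*$-action. The relevant $\cpx^*$-action on $W$ is the one inherited from the $S^1$-factor of $T' = T \times S^1$ (dual to the inclusion of the last coordinate), and $p_0$ is $\cpx^*$-equivariant by construction.

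To push $p_0$ down to $\cW$, I would observe that the dual map of cocharacter lattices $\Z^{n+1} \to \Z$ kills the first $n$ coordinates, so the $T$-action on $W$ maps to the trivial action on $\proj^1$; thus $p_0$ is $T$-invariant, and the formula $p([g : w]) = p_0(w)$ yields a well-defined $G$-equivariant map $p : \cW = G \times_T W \to \proj^1$, with $G$ acting trivially on $\proj^1$. Because the $\cpx^*$-action on the $W$-factor commutes with $T$, it descends to $\cW$ and makes $p$ a $\cpx^*$-equivariant map. Under the convention that $\infty \in \proj^1$ corresponds to the vertex $t = 0$ of $[0, R]$, the toric subvariety $p_0^{-1}(\infty)$ is the toric variety of the bottom face $\{(x, 0) : x \in P\} \cong P$, which is $(V, L)$; applying $G \times_T -$ identifies $p^{-1}(\infty) = i(\cV)$, with $\mathscr{I}|_{p^{-1}(\infty)} = \mathscr{L}$.

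For the remaining fibers, $\cpx^*$-equivariance reduces matters to identifying a single fiber over $\cpx^* \subset \proj^1$: choosing $t$ to correspond to a height $s \in (0, 1)$, the assumption $f \le R - 1$ on $P$ forces $\{x : (x, s) \in Q\} = P$, so $p_0^{-1}(t) \cong V$, and therefore $p^{-1}(t) \cong \cV$ with the line bundle restricting to $\mathscr{L}$. Flatness of $p$ over $\cpx^*$ is then automatic since all these fibers are isomorphic with the same Hilbert polynomial. The main obstacle I anticipate is the bookkeeping needed to match the $\cpx^*$-action on $\cW$ (coming from $S^1 \subset T'$ after the $T$-quotient) with the $\cpx^*$-action on $\proj^1$ (coming from the toric structure on $[0, R]$), and to verify that $\mathscr{I}$ is ample on $\cW$ --- this amounts to checking that $Q$ lies in the positive Weyl chamber of $\real^{n+1}$, which reduces to the corresponding fact for $P$ together with $t > 0$. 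Once this bookkeeping is done, the fibered construction becomes a formal consequence of Donaldson's original toric argument in \cite{don-02}.
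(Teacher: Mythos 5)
Your construction is correct and produces the same map as the paper, but you arrive at it by a different route. The paper (following Donaldson) defines $p$ concretely as a ratio of sections of $\mathscr{I}$: it takes the basis $s_{\l,i,j}$ of $H^0(\cW,\mathscr{I})$ indexed by lattice points of $\ov{Q}$ together with the fiber directions of $G_\cpx\times_B L_\l$, normalizes two sections differing only in the $t$-index at a point of the open orbit, and sets $p(x)=[s_{\l,i,j}(x):s_{\l,i+1,j'}(x)]$; equivariance is then immediate because the two sections have the same $T$-weight and adjacent $S^1$-weights. You instead realize $p$ as the toric morphism induced by the lattice projection $(x,t)\mapsto t$ and then descend through $G\times_T-$ using $T$-invariance of $p_0$. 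On the open torus your $p_0$ is exactly the character computed by the paper's ratio of sections, so the two maps coincide. Your version makes the $G$- and $\cpx^*$-equivariance and the identification of the fibers ($p^{-1}(\infty)=i(\cV)$ from the bottom face; the general fiber $\cong\cV$ from the slice $\{f<R-s\}=P$) essentially transparent, at the cost of two checks you should make explicit: (i) that the projection of lattices is genuinely a map of normal fans, i.e.\ that no cone of the normal fan of $Q$ contains inward normals of both the bottom facet and a top facet --- this is precisely where $f\le R-1$ (so the graph of $R-f$ stays off $t=0$) enters on the toric side, rather than only in identifying the general fiber as you use it; and (ii) flatness over $0\in\cpx$, not just over $\cpx^*$, which does not follow from constancy of the Hilbert polynomial on $\cpx^*$ alone but from irreducibility of $\cW\setminus i(\cV)$ and non-constancy of $p$ (or by deferring to Donaldson, as the paper does). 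The paper's section-theoretic presentation has the advantage that it sets up exactly the weight data $s_{\l,i,j}$ reused in the next section to compute $w_k$ and the Futaki invariant, so nothing needs to be translated back. Your remark on ampleness can be simplified: since the $S^1$ factor of $G'=G\times S^1$ contributes no roots, positivity of $\mathscr{I}$ only requires $P$, not all of $Q$, to lie in the positive Weyl chamber.
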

\begin{proof}
    As explained in section \ref{lie_theory_sec}, a basis for the sections of
    $\mathscr{I} \to \cW$ is given by $s_{\l,i,j}$ where $\l$ is a lattice point
    in $\ov{P} \cap \mathbb{Z}^n$, $0 \leq i \leq R - f(\l)$, and $1 \leq j \leq \dim
    H^0(G_\cpx \times_B L_\l)$, as in \eqref{dimension_formula}. Note that the
    action of $T'$ on sections $s_{\l,i,j}$ and $s_{\l,i+1,j'}$ only differs in
    the last component of $T' = T \times S^1$. Choose a point $p \in \cW$ where
    none of these sections vanish (this corresponds to the open
    $(\cpx^*)^{n+1}$-torus in $W$). Next rescale the sections to all take the
    same value in $\mathscr{I}$ over the point $p$. Define the map $p:\cW \to
    \proj^1$ by
    \begin{equation*}
        x \mapsto [s_{\l,i,j}(x):s_{\l,i+1,j'}(x)].
    \end{equation*}
    As in Donaldson's case, this gives a $\cpx^*$-equivariant map $\cW \to
    \proj^1$, maping $i(\cV)$ to $[1,0].$ Define $\cX = \cW - i(\cV)$ and we
    have that $\mathscr{I}|_{\cX} \to \cX \to \cpx, x \mapsto \frac{s_{\l, i,
    j}(x)}{s_{\l, i+1, j'}(x)} \in \cpx,$ is a test configuration for $\cV$.  The
    rest of the proof goes through unchanged from the arguments in
    \cite{don-02}.
\end{proof}

\section{Futaki Invariant}

In this section, we will compute the Futaki invariant of the test-configuration
constructed in Proposition \ref{prop_test_config_construction}, hence providing
a proof of Theorem \ref{main_theorem}. Notation from section
\ref{lie_theory_sec} will be used throughout. To compute the Futaki
invariant---given by the number $F_1$ in \eqref{def_futaki_invariant}---we need
to compute $d_k$ and $w_k$. The lemmas in \cite{don-02} generalize
straightforwardly:
\begin{lemma}
    The number $d_k = h^0(\cX_0,\mathscr{I}|_{\cX_0}^k)$ equals
    $h^0(\cV,\mathscr{L}^k)$.
\end{lemma}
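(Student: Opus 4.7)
My plan is to follow Donaldson's argument in \cite{don-02} and deduce the dimension equality from flatness of the degeneration. Extend the test configuration to the map $p:\cW \to \proj^1$ of Proposition \ref{prop_test_config_construction}. By construction the fiber over $\infty$ is $i(\cV)$ with $\mathscr{I}$ restricting to $\mathscr{L}$, and the $\cpx^*$-equivariance of $p$ then implies that every fiber $p^{-1}(t)$ for $t \in \cpx^*$ is isomorphic to $(\cV,\mathscr{L})$ as well. Thus the Hilbert-function computation is really a comparison between the central fiber and the generic fiber of a family whose generic fiber is $\cV$.

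First I would verify that $p$ is flat. The total space $\cW = G \times_B W$ is irreducible (associated bundle over the flag variety with irreducible toric fiber $W$), so the non-constant map $p$ to the smooth curve $\proj^1$ is automatically flat. Next I would apply the usual cohomology-and-flatness package: for a flat projective morphism the Euler characteristic $t \mapsto \chi(\cW_t, \mathscr{I}^k|_{\cW_t})$ is locally constant on $\proj^1$. Since $\mathscr{I}$ is ample on $\cW$ and hence relatively ample over $\proj^1$, Serre vanishing gives a uniform $k_0$ such that for every $k \ge k_0$ and every $t \in \proj^1$,
\begin{equation*}
    h^i(\cW_t, \mathscr{I}^k|_{\cW_t}) = 0, \qquad i \ge 1,
\end{equation*}
so that $h^0 = \chi$ is constant in $t$. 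Evaluating at a generic $t$ yields $h^0(\cV,\mathscr{L}^k)$, while evaluating at $t=0$ gives $h^0(\cX_0, \mathscr{I}|_{\cX_0}^k) = d_k$, producing the claimed equality for all $k \ge k_0$. This is sufficient, since the invariant $F_1$ in \eqref{def_futaki_invariant} is extracted only from the large-$k$ asymptotics.

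The main obstacle is bookkeeping: one must confirm that the scheme-theoretic fiber $p^{-1}(0)$ coincides with the central fiber $\cX_0$ of the test configuration, and that the polytope $Q$ genuinely produces a flat toric degeneration (the top face of $Q$ may be singular where $f$ has corners). Both issues are dealt with in \cite{don-02} for the purely toric situation, and the additional flag-variety factor $G/B$ does not affect the argument because $G$ acts trivially on the base $\proj^1$ and all the cohomological statements are fiberwise; as an alternative sanity check, one can verify the equality directly by combining the decomposition \eqref{G_sections} with the Weyl dimension formula \eqref{dimension_formula}, summing over the lattice points $k\ov{P} \cap \mathbb{Z}^n$ on both sides.
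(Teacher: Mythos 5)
Your argument is correct, but it is not the route the paper (implicitly) takes. The paper offers no proof at all here --- it simply asserts that Donaldson's lemmas from \cite{don-02} generalize --- and the intended argument is the explicit, combinatorial one that is visible in the companion lemma about $w_k$: one shows that the sections $s_{\l,R-f(\l),j}$ of $\mathscr{I}^k$ restrict to a basis of $H^0(\cX_0,\mathscr{I}|_{\cX_0}^k)$ while all other monomial sections restrict to zero, so that $d_k$ is indexed by $\l\in k\ov{P}\cap\mathbb{Z}^n$ together with $1\le j\le \dim H^0(G_\cpx\times_B L_\l)$, which by the decomposition \eqref{G_sections} is exactly $h^0(\cV,\mathscr{L}^k)$. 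Your flatness/Euler-characteristic argument (irreducibility of $\cW$ forces flatness of $p$ over the smooth curve $\proj^1$, relative Serre vanishing makes $h^0=\chi$ for $k\gg 0$, and $\chi$ is constant in $t$) is the standard general-theory proof and is perfectly valid; it is more robust in that it works for an arbitrary test configuration, but it yields only the dimension count, whereas the section-level argument is needed anyway for the second lemma (the weight computation for $w_k$), which is why the paper and Donaldson organize things that way. Two small points of care: the claim that $\cpx^*$-equivariance alone identifies the fibers over $t\in\cpx^*$ with $\cV$ is not quite right, since the fiber over $\infty$ is a fixed point of the action and cannot be transported to a generic fiber --- you should instead invoke Proposition \ref{prop_test_config_construction} (or the product structure of $p^{-1}(\proj^1\setminus\{0\})$), which already asserts that the generic fiber is $(\cV,\mathscr{L})$; and the uniform-in-$t$ Serre vanishing is most cleanly phrased via vanishing of $R^ip_*\mathscr{I}^k$ and cohomology-and-base-change, as you indicate.
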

\begin{lemma}
    The sections $s_{\l,R-f(\l),j}$ are not identically zero when restricted to
    $\cX_0$ while all other sections restrict identically to zero. Consequently,
    the number $w_k$ is given by the sum of the weights on the sections
    $s_{\l,R-f(\l),j},$ for $1 \leq j \leq \dim (L_\l),$ and each weight is
    $f(\l)-R$.
\end{lemma}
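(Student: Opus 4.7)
The argument closely parallels Donaldson's treatment of the pure toric case in \cite{don-02}. The extra $\cpx^*$-factor used in the test configuration commutes with the left $G$-action on $\cW$, so the $G$-fibration plays no role in identifying surviving sections or computing weights; it contributes only through the Weyl multiplicity $\dim(L_\l)$ from \eqref{dimension_formula}.

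First, I would identify the central fiber $\cX_0 = p^{-1}(0)$ with the torus-invariant divisor in $\cW$ corresponding to the ``top face'' of $Q$---the graph $\{(x, R - f(x)) : x \in \ov{P}\}$, fibered over $\B = G/T$. This follows by expressing the map $p$ as the meromorphic function $s_{\l,i,j}/s_{\l,i+1,j'}$ on $\cW$: on the open $(\cpx^*)^{n+1}$-torus inside $W$, this ratio is a nonzero constant times the last toric coordinate $t$ (because the two sections differ by one weight in precisely the $S^1$ direction). Hence $p^{-1}(\infty) = i(\cV)$ (the bottom face $t = 0$) and $p^{-1}(0) = \cX_0$ is the top face.

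Next, I would determine which basis sections restrict non-trivially to $\cX_0$. By the standard toric dictionary, a section $s_{\l, i, j}$---corresponding to the lattice point $(\l, i) \in Q$ together with an element of the Weyl basis of $H^0(G_\cpx \times_B L_\l)$---restricts non-trivially to a torus-invariant stratum iff its lattice point lies on the corresponding face. For the top face this forces $i = R - f(\l)$, so sections with $i \neq R - f(\l)$ vanish identically on $\cX_0$, while each surviving $s_{\l, R - f(\l), j}$ is nonzero on a dense open subset of the top face and this non-triviality is carried through the whole fibration by the $G$-action. The same argument applies verbatim to $\mathscr{I}^k$ after replacing $Q$ by $kQ$.

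Finally, to compute the weights I would normalize so that the $\cpx^*$-action scales the last toric coordinate by $t \mapsto zt$; the standard formula for the induced action on toric sections then gives $z \cdot s_{\l, i, j} = z^{-i} s_{\l, i, j}$, and the $\cpx^*$-equivariance of $p$ with respect to standard multiplication on $\cpx$ pins down this sign convention. Each surviving section $s_{\l, R - f(\l), j}$ therefore has weight $-(R - f(\l)) = f(\l) - R$, yielding the stated formula for $w_k$. The main technical obstacle is the sign and orientation bookkeeping---checking that $z \to 0$ really degenerates to the top face rather than the bottom and that the weight convention is consistent with $i(\cV) \mapsto [1 : 0]$. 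Once this is pinned down, every step reduces to its toric analogue in \cite{don-02}, with only the harmless summation over the index $j$ added.
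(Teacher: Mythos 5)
Your proposal is correct and follows exactly the route the paper intends: the paper offers no proof of this lemma beyond asserting that the corresponding lemmas in \cite{don-02} generalize straightforwardly, and your argument is precisely that generalization (central fiber identified with the roof of $Q$, surviving sections indexed by lattice points on the roof, weight $-i$ on $s_{\l,i,j}$, with the left $G$-action only contributing the multiplicity over the index $j$). The only quibble is a sign slip in your second paragraph: on the open torus the ratio $s_{\l,i,j}/s_{\l,i+1,j'}$ is a nonzero constant times $t^{-1}$, not $t$ (the section at height $i$ is the monomial $x^{\l}t^{i}$), which is exactly what makes $p^{-1}(\infty)$ the bottom face $t=0$ and $p^{-1}(0)$ the roof, as you correctly conclude.
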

Equation \eqref{G_sections} tells us that
\begin{equation*}
    \label{}
    d_k = \sum_{\l \in k\ov{P} \cap \mathbb{Z}^n} \dim H^0(G_\cpx \times_B L_\l)
\end{equation*}
and that the number $w_k$ is given by
\begin{align*}
    \label{}
    w_k &= \sum_{\l \in k\ov{P} \cap \mathbb{Z}^n} \dim H^0(G_\cpx \times_B
            L_\l) k(f(\l/k)-R) \\
        &= \sum_{\l \in k\ov{Q} \cap \mathbb{Z}^{n+1}} \dim H^0(G_\cpx \times_B
                L_\l) - \sum_{\l \in k\ov{P} \cap \mathbb{Z}^n} \dim H^0(G_\cpx
                \times_B L_\l).
\end{align*}
where $\pi : \mathbb{Z}^{n+1} \to \mathbb{Z}$ is the projection map
$(\l^1,\cdots,\l^{n+1}) \mapsto (\l^1,\cdots,\l^n),$ given in coordinates.
Equation \eqref{dimension_formula} says
\begin{equation*}
    \label{}
    \dim H^0(G_\cpx \times_B L_{(\sum_i \l^i \nu^i)}) = \prod_{\al \in \D^+} \frac{\k(\r
        + \sum_i \l^i\nu^i, \al)}{\k(\r, \al)}.
\end{equation*}
The important facts we need from Lie theory are that $\k(\nu^j,\al_k) =
\d_{jk}$, and that $\al = \sum_k M^\al_k \al_k$ as given by \eqref{M_alpha-def}. Hence we can
write the dimension formula as
\begin{align*}
    \label{}
    \dim H^0(G_\cpx \times_B L_{\l_i \nu^i}) &= \prod_{\al \in \D^+}
            \frac{\k(\sum_j (1+\l^j)\nu^j,\al)}{\k(\nu^1 + \cdots + \nu^n, \al)} \\
        &= \prod_{\al \in \D^+} \frac{\k(\sum_j (1+\l^j)\nu^j, \sum_k M^\al_k\al_k)}{\k(\nu^1 +
            \cdots + \nu^n, \sum_k M^\al_k\al_k)} \\
        &= \prod_{\al \in \D^+} \frac{(\sum_{j=1}^n M^\al_j) +
            \l^jM^\al_j}{\sum_{j=1}^n M^\al_j}.
\end{align*}
For notational convenience, define $|M^\al| = \sum_{j=1}^n M^\al_j$. Hence the
two numbers we need to understand are
\begin{equation*}
    d_k = \sum_{\l \in k\ov{P} \cap \mathbb{Z}^n} \prod_{\al \in \D^+} \frac{|M^\al|
            + \l^j M^\al_j}{|M^\al|}
\end{equation*}
and
\begin{equation*}
    w_k = \sum_{\l \in k\ov{Q} \cap \mathbb{Z}^{n+1}} \prod_{\al \in \D^+}
    \frac{|M^\al| + \l^jM^\al_j}{|M^\al|} - \sum_{\l \in k\ov{P} \cap
            \mathbb{Z}^n} \prod_{\al \in \D^+} \frac{|M^\al| +
            \l^jM^\al_j}{|M^\al|}.
\end{equation*}
We are interested in the ratio $\frac{w_k}{kd_k}$ and hence the common factor of
$|M^\al|$ in the denominator of these formulas can be ignored. This leads us to
define the polynomial $q(\l)$ by
\begin{equation*}
    q(\l) = \prod_{\al \in \D^+} \left( |M^\al| + \l^j M^\al_j \right).
\end{equation*}
Note that $q(\l)$ is an $N^\mathrm{th}$ degree polynomial in $\l$---where $N$
is the number of positive roots. We are interested in the assymptotics of such
polynomials as they are summed over lattice points in the polytope. These
assymptotics can be understood by using a specific measure on the boundary of
the polytope. We recall the following definition made in \cite{don-02}:
\begin{definition}
    \label{sigma_def}
    Let $P$ be an integer lattice polytope in $\real^n$. This means that for
    each face $F$ of $\ov{P}$, there is a vector $v_F$ which is perpendicular to
    $F$, pointing inwards, such that $v_F$ is the smallest such vector in the
    $\mathbb{Z}^n$ lattice. Let $l_F$ be the affine linear map such that
    $l_F^{-1}(0) \cap \ov{P} = F$ and such that the derivative of $l_F$ is equal
    to $v_F$. Finally, define the measure $d\si$ on $\pP$ by requiring that
    $d\si_F := d\si|_F$ be positive and that it satisfy $d\si_F \wedge dl_F =
    d\mu$, up to sign, where $d\mu$ is the standard Lebesgue measure on
    $\real^n$.
\end{definition}
We now wish to apply Lemma \ref{sigma_lemma} to this problem. 
To apply this lemma to $q$, we need to decompose $q$ into its homogeneous parts.
Let $q_k$ be the homogenous part of $q$ of order $k$.
\begin{align*}
    q(\l)  &= \prod_{\al \in \D^+} \left( |M^\al| + \l^j M^\al_j \right) \\
            &= \left( \prod_{\al \in \D^+} \l^j M^\al_j \right) + \left(
                \sum_{\b \in \D^+} |M^\b| \prod_{\al \neq \b} \left(
                \l^j M^\al_j \right) \right) + r(\l) \\
            &= q_N(\l) + q_{N-1}(\l) + r(\l),
\end{align*}
where $r$ is a polynomial of degree $N-2$. Note that $q_N$ is convex in the
positive quadrant. Using our lemma, we compute
\begin{align*}
    d_k &= \sum_{\l \in k\ov{P} \cap \mathbb{Z}^n} \left( q_N(\l) + q_{N-1}(\l) +
            r(\l) \right) \\
            &= k^N \sum_{\l \in \ov{P} \cap \frac{1}{k}\mathbb{Z}^n} q_N(\l) + k^{N-1}
            \sum_{\l \in \ov{P} \cap \frac{1}{k}\mathbb{Z}^n} q_{N-1}(\l) +
            \sum_{\l \in \ov{P} \cap \frac{1}{k}\mathbb{Z}^n} r(k\l) \\
        &= k^{N+n} \int_P q_N d\mu + k^{N+n-1} \left( \int_P q_{N-1}d\mu +
            \frac{1}{2} \int_\pP q_N d\si \right) + o(N+n-2).
\end{align*}
Similarly, we compute $w_k$:
\begin{align*}
    w_k &= \sum_{\l \in k\ov{Q} \cap \mathbb{Z}^{n+1}} q(\pi(\l)) - \sum_{\l \in
k\ov{P} \cap \mathbb{Z}^n} q(\l) \\
        &= k^{N+n+1} \left( \int_Q q_Nd\mu \right) + k^{N+n} \left( \int_Q
            q_{N-1}d\mu - \int_P q_Nd\mu + \frac{1}{2} \int_{\partial Q} q_Nd\si
            \right) \\
        & \ \ \ \ \ + o(K+n-1).
\end{align*}
The Fubini Theorem tells us that $\int_Qq_Nd\mu = \int_Pq_N(R-f)d\mu$ and that
$\int_Q q_{N-1}d\mu = \int_P q_{N-1}(R-f)d\mu.$ Furthermore,
\begin{equation*}
    \label{}
    -\int_Pq_Ndx + \frac{1}{2} \int_{\partial Q}q_Nd\si = \frac{1}{2}\int_\pP
        q_N(R-f)d\si.
\end{equation*}
Hence we have that
\begin{equation*}
    \label{}
    d_k = Ck^{N+n} + Dk^{N+n-1} + o(N+n-2)
\end{equation*}
and
\begin{equation*}
    \label{}
    w_k = Ak^{N+n+1} + Ck^{N+n} + o(N+n-1),
\end{equation*}
where the constants $A, B, C,$ and $D$ are given by:
\begin{itemize}
    \item[] $A = \int_P q_N(R-f)d\mu$
    \item[] $B = \int_P q_{N-1}(R-f)d\mu + \frac{1}{2}\int_\pP q_N(R-f)d\si$
    \item[] $C = \int_P q_Nd\mu$
    \item[] $D = \int_P q_{N-1}dx + \frac{1}{2}\int_{\pP} q_N d\si$
\end{itemize}
To compute the Futaki invariant, we need to compute the term $F_1 =
C^{-2}(BC-AD)$. Straight-forward computations yield
\begin{align*}
    F_1 &= \frac{-1}{\int_P q_Nd\mu} \left\{ \int_P f q_{N-1} dx +
            \frac{1}{2}\int_\pP f q_N d\si - \frac{\int_P q_{N-1} d\mu +
            \frac{1}{2} \int_\pP q_N d\si}{\int_P q_N d\mu}\int_P f q_N d\mu
            \right\}.
\end{align*}
%

%
First note that $q_N$ is the same polynomial as $p$ given by \eqref{p-def}.
Next note that $q_{N-1} = \sum_l \frac{\pa}{\pa x^l} p = \frac14 p f_G$, where
$f_G$ is given by \eqref{scalar_curvature}.
\begin{lemma}
    We have that
    \begin{equation*}
        \label{}
        \frac{\int_P q_{N-1} d\mu + \frac{1}{2} \int_\pP q_N d\si}{\int_P q_N
            d\mu} = \frac{a}{2},
    \end{equation*}
    where $a$ is the average scalar curvature of any metric.
\end{lemma}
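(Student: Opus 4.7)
The plan is to derive an explicit polytope-integral expression for the average scalar curvature $a$ from the Raza--Donaldson formula, then read off the stated ratio by matching terms.

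First I would observe that, by the moment-map pushforward computation in Section \ref{finding_Omega}, the Riemannian volume form on $\cV$ descends (up to a multiplicative constant depending only on $G$) to the weighted measure $p(x)\, d\mu$ on $P$, so that
\begin{equation*}
    a = \frac{\int_P S\, p\, d\mu}{\int_P p\, d\mu}.
\end{equation*}
Substituting the scalar curvature equation \eqref{scalar_curvature} gives
\begin{equation*}
    a \int_P p\, d\mu = -\frac{1}{2}\int_P (pu^{jk})_{jk}\, d\mu + \int_P p f_G\, d\mu.
\end{equation*}

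The heart of the argument is a weighted version of Donaldson's integration-by-parts identity for the first term. I would expand
\begin{equation*}
    (pu^{jk})_{jk} = p\,(u^{jk})_{jk} + 2\, p_j\,(u^{jk})_k + p_{jk}\,u^{jk},
\end{equation*}
apply the standard Donaldson identity $\int_P \phi\,(u^{jk})_{jk}\, d\mu = \int_P u^{jk}\phi_{jk}\, d\mu - \int_{\pP} \phi\, d\si$ with $\phi = p$ (which is a polynomial, hence smooth up to $\pP$) to the first summand, and integrate by parts once more in the cross term $\int_P p_j(u^{jk})_k\, d\mu$. The boundary contribution of that cross term vanishes by the standard fact $u^{jk} v_F^k|_F = 0$---a direct consequence of the log-type expansion $u = l_F\log l_F + (\text{smooth})$ of the symplectic potential along each face $F$. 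Summing the three contributions then collapses $\int_P (pu^{jk})_{jk}\, d\mu$ into a single boundary integral proportional to $\int_{\pP} p\, d\si$.

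Finally, I would substitute this boundary identity into the expression for $a\int_P p\, d\mu$, divide through by $2\int_P p\, d\mu$, and identify $q_N = p$ from \eqref{p-def} together with $q_{N-1} = \sum_k p_k$ as a scalar multiple of $p f_G$ via the relation $f_G = 2\sum_k \partial_k\log p$ read off from \eqref{eq1}. The main obstacle I anticipate is keeping track of factors and signs in the weighted integration by parts: justifying the vanishing of the cross-term boundary contribution requires the precise asymptotic behavior of $u^{jk}$ along $\pP$ rather than merely the smoothness of $p$, and one must verify that the surviving constants align so that the two resulting expressions for $a/2$ agree exactly. Once those are pinned down, the remainder is algebraic bookkeeping.
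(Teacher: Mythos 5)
Your proposal is correct and follows essentially the same route as the paper: both express $a$ as the average of $S$ against the weighted measure $p\,d\mu$, substitute the scalar curvature equation \eqref{scalar_curvature}, and reduce $\int_P(pu^{jk})_{jk}\,d\mu$ to $-2\int_{\pP}p\,d\si$ by the weighted integration-by-parts identity (which the paper invokes without derivation but you correctly derive from the unweighted identity together with the vanishing of $u^{jk}v_F^k$ along each face). The only difference is that you spell out the boundary-term analysis that the paper leaves implicit.
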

\begin{proof}
    Let $u$ be the symplectic potential of any metric. Then
    \begin{align*}
        \label{}
        a \int_P p(x)dx &= \int_P S p(x)dx \\
            &= \frac 12 \int_P -(p(x)u^{jk})_{jk} dx + \int_P f_G p(x) dx \\
            &= \frac 12 \int_\pP p(x)2d\si + 2\int_P q_{N-1} dx \\
            &= 2 \left( \frac12 \int_\pP p(x)d\si + \int_P q_{N-1} dx \right). \\
    \end{align*}
    Which is what we needed to show.
\end{proof}
Hence we have proved that the Futaki invariant of the test configuration we constructed is equal to
    \begin{equation*}
        \label{}
        -\frac{1}{2\mathrm{Vol}_p(P)} \left( \int_P ff_Gpd\mu +
            \int_{\partial P} fpd\si - a\int_P fpd\mu \right).
    \end{equation*}
As explained in Subsection \ref{finding_Omega}, $p(x) = CW(x)$ for some positive
constant $C$, and hence the proof of Theorem \ref{main_theorem} is complete.

\section{Mabuchi Functional}

This section straightforwardly generalizes Donaldson's work, and the reader is
refered to \cite{don-02} for more details. Given our scalar curvature equation
\eqref{scalar_curvature}, we consider the analytic picture as follows. Let $l_F$
be the affine linear function on $P$ as in Definition \ref{sigma_def}. Next
define $u_\si$ to be the function
\begin{equation*}
    \label{}
    u_\si(\cdot) = \frac12 \sum_F l_F(\cdot) \log l_F(\cdot).
\end{equation*}
Let $\cS$ be the space of all $u$ defined on $P$ such that $u-u_\si$ is smooth
up to the boundary of $\ov{P}$ and such that $u$ is strictly convex on $P$ as well as
when restricted to any of the faces of $\ov{P}$. The goal is then to solve the
equation
\begin{equation}
    \label{analytic_equation}
    - W^{-1}(Wu^{ij})_{ij} = A,
\end{equation}
where $A$ is a smooth function on $\ov{P}$. Note that if one chooses $A = \frac{a -
f_G}2$, then this is the constant scalar curvature equation. Next define the
functional $\cL_A$ and $\cF_A$ on $\cS$ by
\begin{equation*}
    \label{cL_def}
    \cL_A(u) = 2\int_\pP uWd\si - \int_PAuWd\mu,
\end{equation*}
and
\begin{equation*}
    \label{cF_def}
    \cF_A(u) = -\int_P \log \det (u_{jk})W d\mu + \cL_A(u).
\end{equation*}
$\cF_A$ is a concave functional on $\cS$. The variation $\d \cF_A$ of $\cF_A$
by a smooth function $\d u$ is given by
\begin{equation*}
    \label{}
    \d \cF_A = - \int_P u^{jk} \d u_{jk} W d\mu + \cL_A(\d u).
\end{equation*}
Donaldson's work in \cite{don-02} allows us to to use the boundary conditions of
$u$ to integrate by parts twice to get
\begin{equation*}
    \label{}
    - \int_P u^{jk} \d u_{jk} W d\mu = -\int_P W^{-1}(Wu^{jk})_{jk} \d u W d\mu
        - 2\int_\pP \d u W d\si,
\end{equation*}
and hence we have that
\begin{equation*}
    \label{}
    \d \cF_A = - \int_P u^{jk} \d u_{jk} W d\mu - \int_P A \d u W d\mu.
\end{equation*}
Since $W$ is strictly positive on $P$, this says that solutions to
\eqref{analytic_equation} are the same as critical points of the concave
functional $\cF_A$. If we choose $A = \frac{a-f_G}2$, then $\cF_A$ is the
Mabuchi functional on the polytope $P$ and we arive at a proof of Thereom
\ref{mabuchi_theorem}.

\section{Proof Lemma \ref{sigma_lemma}}
\label{section_lemma_proof}

Definition \ref{sigma_def} was stated as is to agree with the definition of
$\si$ given in \cite{don-02} to avoid possible confusion. However, for our
purposes, a more useful and equivalent definition is given by the following
lemma.
\begin{lemma}
    \label{sigma_def2_lemma}
    The measure $d\si(F)$ of a face $F$ of the polytope $P$ is given by
    \begin{equation}
        \label{sigma_def2}
        d\si(F) = \lim_{k \to \i} \frac{\#(F \cap
                    \frac{1}{k}\mathbb{Z}^n)}{k^{n-1}},
    \end{equation}
where $\#(S)$ is the number of points in the set $S$.
\end{lemma}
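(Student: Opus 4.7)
The plan is to compute $\int_F d\si$ by changing to coordinates adapted to $F$ and then invoking a classical leading-order lattice-point count. I may assume $F$ is a codimension-one face of $\ov{P}$, since $d\si$ is only supported on such faces, and lower-dimensional faces contribute nothing to either side of \eqref{sigma_def2} (as $\#(F \cap \frac{1}{k}\mathbb{Z}^n) \sim k^{\dim F}$ grows strictly slower than $k^{n-1}$). Pick any integer point $p_0 \in F \cap \mathbb{Z}^n$ (which exists because $P$ is an integer polytope) and let $\Lambda_F := (H_F - p_0) \cap \mathbb{Z}^n$ be the rank $(n-1)$ sublattice of lattice directions along $F$, where $H_F = l_F^{-1}(0)$.

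The main algebraic step is to select a $\mathbb{Z}$-basis $w_1, \ldots, w_{n-1}$ of $\Lambda_F$ and extend it to a $\mathbb{Z}$-basis $w_1, \ldots, w_{n-1}, u$ of $\mathbb{Z}^n$. The crucial technical claim is that $\langle v_F, u \rangle = \pm 1$. Indeed, the linear functional $w \mapsto \langle v_F, w \rangle$ sends $\mathbb{Z}^n$ into $\mathbb{Z}$; if its image were $m\mathbb{Z}$ with $m > 1$, then $v_F/m$ would pair integrally with every element of $\mathbb{Z}^n$, forcing $v_F/m \in \mathbb{Z}^n$ and contradicting primitivity of $v_F$. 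So the map is surjective, its kernel is precisely $\Lambda_F$, and $u$, representing a generator of $\mathbb{Z}^n/\Lambda_F \cong \mathbb{Z}$, must satisfy $\langle v_F, u \rangle = \pm 1$.

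Introduce adapted coordinates $(t_1, \ldots, t_{n-1}, s)$ on $\real^n$ via $x = p_0 + \sum_i t_i w_i + s u$. Because this is a unimodular change of basis of $\mathbb{Z}^n$, one has $d\mu = dt_1 \wedge \cdots \wedge dt_{n-1} \wedge ds$ and $l_F = \pm s$, so $dl_F = \pm ds$. The defining relation $d\si_F \wedge dl_F = d\mu$ then forces $d\si_F = dt_1 \wedge \cdots \wedge dt_{n-1}$ on $F$, so $\int_F d\si$ equals the standard $(n-1)$-dimensional Lebesgue measure $\mathrm{Vol}(F')$ of the rational polytope $F' \subset \real^{n-1}$ parameterizing $F$ in the $t$-coordinates. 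On the counting side, the condition $\langle v_F, u \rangle = \pm 1$ forces the $s$-coordinate of any point of $\frac{1}{k}\mathbb{Z}^n$ lying on $H_F$ to vanish, yielding a bijection between $F \cap \frac{1}{k}\mathbb{Z}^n$ and $F' \cap \frac{1}{k}\mathbb{Z}^{n-1}$. The classical leading-order Ehrhart estimate $\#(F' \cap \frac{1}{k}\mathbb{Z}^{n-1}) = k^{n-1}\mathrm{Vol}(F') + O(k^{n-2})$ then yields the lemma upon dividing by $k^{n-1}$ and letting $k \to \infty$. The principal obstacle is the primitivity argument for $\langle v_F, u \rangle = \pm 1$; once that is secured, everything else is a linear change of variables and a standard asymptotic count.
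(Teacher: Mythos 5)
Your argument is correct, and it reaches the conclusion by a genuinely different route than the paper. The paper reduces to the case where $P$ is a simplex spanned by the origin and the points $p_1e_1,\ldots,p_ne_n$ with the $p_i$ pairwise coprime, then explicitly computes both $\int_F d\si$ and the lattice-point count for the single facet not lying in a coordinate hyperplane, checking that each equals $\frac{1}{(n-1)!}$; the reduction to this special configuration (in particular the coprimality hypothesis and the decomposition of a general integer polytope into such pieces) is asserted rather than justified. You instead treat an arbitrary facet directly: primitivity of $v_F$ makes $w \mapsto \langle v_F, w\rangle$ surjective from $\mathbb{Z}^n$ onto $\mathbb{Z}$, so $\Lambda_F$ is a saturated sublattice, $\mathbb{Z}^n \cong \Lambda_F \oplus \mathbb{Z}u$ with $\langle v_F,u\rangle = \pm 1$, and the resulting unimodular coordinates simultaneously solve the defining relation $d\si_F \wedge dl_F = d\mu$ (giving $d\si_F = dt_1\wedge\cdots\wedge dt_{n-1}$) and identify $F \cap \frac{1}{k}\mathbb{Z}^n$ with the $\frac{1}{k}$-lattice points of an integral polytope $F'$ in $\real^{n-1}$, where the leading-order Ehrhart count applies. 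What your approach buys is generality and a transparent explanation of the $GL(n,\mathbb{Z})$-invariance of $d\si$, which the paper only remarks on after its proof; what it costs is the standard algebraic input that a sublattice with torsion-free quotient is a direct summand, which you correctly supply via the splitting of $0 \to \Lambda_F \to \mathbb{Z}^n \to \mathbb{Z} \to 0$. Both proofs are sound at the level of rigor of the paper, but yours avoids the under-justified reduction step entirely.
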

\begin{proof}
    We can assume that $P$ is given as the convex hull of the extreme points
    $(0,\ldots,0),$ $(p_1, 0, \ldots, 0), \ldots, (0, \ldots, 0, p_n),$ where
    the $p_i$ are positive integers and $p_i$ and $p_j$ are coprime for $i \neq
    j$. To verify the lemma, we only need to show that the equation is satisfied
    for the face $F$ of $P$ that does not include the origin. (The other faces
    are entirely contained in the standard subsets $(x_i \equiv 0)$ where this
    lemma is clearly true.) The primitive outward orthogonal vector $v$ to face
    $F$ is given by
    \begin{equation*}
        v = \sum_{i = 1}^n \left( \prod_{j \neq i} p_j \right) e_i,
    \end{equation*}
    where the $e_i$ are the standard basis vectors of $\real^n$. Hence the
    measure $d\si_F$ is given by the following form on $\real^n$ restricted to
    $F$:
    \begin{equation*}
        \label{}
        d\si_F = \left( \prod_{i=1}^{n-1} p_i \right)^{-1} dx_1 \wedge \cdots \wedge
                dx_{n-1}.
    \end{equation*}
    This form can be integrated over the face $P$ given by $(x_n \equiv 0)$ and
    yields the result
    \begin{equation}
        \label{F_measure}
        d\si_F(F) = \frac{1}{(n-1)!}.
    \end{equation}
%

%
    Next we would like to verify that we get the same result from equation
    \eqref{sigma_def2}. The fact that $p_i$ and $p_j$ are coprime for $i \neq j$
    tells us that the set $F \cap \mathbb{Z}^n$ has exactly $n$ points and that
    those are the extreme points of $F$. This means that the ``projection" map
    $\pi$ defined by
    \begin{equation*}
        \label{}
        \pi(x_1, \ldots, x_{n-1}, x_n) = \left( \frac{x_1}{p_1}, \ldots,
            \frac{x_{n-1}}{p_{n-1}} \right),
    \end{equation*}
    maps the lattice points of $F \cap \mathbb{Z}^n$ to the lattice points of
    the standard $(n-1)$-simplex $S$ in $\real^{n-1}$. This mapping shows that
    the number of lattice points in $F \cap \frac{1}{k} \mathbb{Z}^n$ is the
    same as the number of lattice points in $S \cap \frac{1}{k}
    \mathbb{Z}^{n-1}$. But the final number is simply $k^n \mathrm{Vol}(S)$ to
    highest order. One can verify that $\mathrm{Vol}(S)$ agrees with
    \eqref{F_measure} which proves the lemma.
\end{proof}
Equation \eqref{sigma_def2} says that the measure of $F$ is given asymptotically
by the number of lattice points in $kF$. Note that this makes it clear that the
measure is invariant under transformations in $GL(n,\mathbb{Z})$.
%

%
We will prove Lemma \ref{sigma_lemma} by comparing the sum on the left side of
\eqref{lemma_sum} to the integrals on the right side of the equation. This
requires some care and leads us to make quite a few definitions. Let $\cP_k$ be
the set of points $\cP_k = P \cap \frac{1}{k}\mathbb{Z}^n.$
For a given point $p \in \cP_k$, let $\Box_k(p) = \Box_k(p_1, \cdots, p_n)$ be
the \textit{box} defined by
    \begin{equation*}
        \label{}
        \Box_k(p_1, \ldots, p_n) = \left[ p_1, p_1 + \frac{1}{k} \right] \times 
                                \cdots \times \left[ p_n, p_n + \frac{1}{k} 
                                \right] \subset \real^n.
    \end{equation*}
Given a box $\Box_k(p)$, we will call $p$ the \textit{corner point} of
$\Box_k(p)$ and call $p_{k,m} := (p_1 + \frac{1}{2k}, \ldots, p_n +
\frac{1}{2k})$ the \textit{midpoint} of $\Box_k(p)$. Furthermore, we will need
to partition $\cP_k$ into the disjoint sets of \textit{interior}, \textit{face},
and \textit{exterior} points as follows:
    \begin{itemize}
        \item[] $\cI_k = \{ p \in \cP_k \ | \ \Box_k(p) \cap P = \Box_k(p) \}$
        \item[] $\cE_k = \{ p \in \cP_k \ | \ \Box_k(p) \cap P = \{ p \} \ \}$
        \item[] $\cF_k = \cP_k \ \backslash \ \left( \cI_k \cup \cF_k \right)$
    \end{itemize}
Note: $\cI_k$ contains points on the boundary of $P$. Next, define (non-convex)
subsets of $\real^n$ as follows
    \begin{itemize}
        \item[] $P_{\cI,k} = \bigcup_{p \in \cI_k} \Box_k(p)$
        \item[] $P_{\cF,k} = \bigcup_{p \in \cF_k} \Box_k(p)$
        \item[] $P_{\cE,k} = \bigcup_{p \in \cE_k} \Box_k(p)$
    \end{itemize}
Figure \ref{figure} illustrates these definitions.
\begin{figure}[h]
    \begin{center}
        \includegraphics{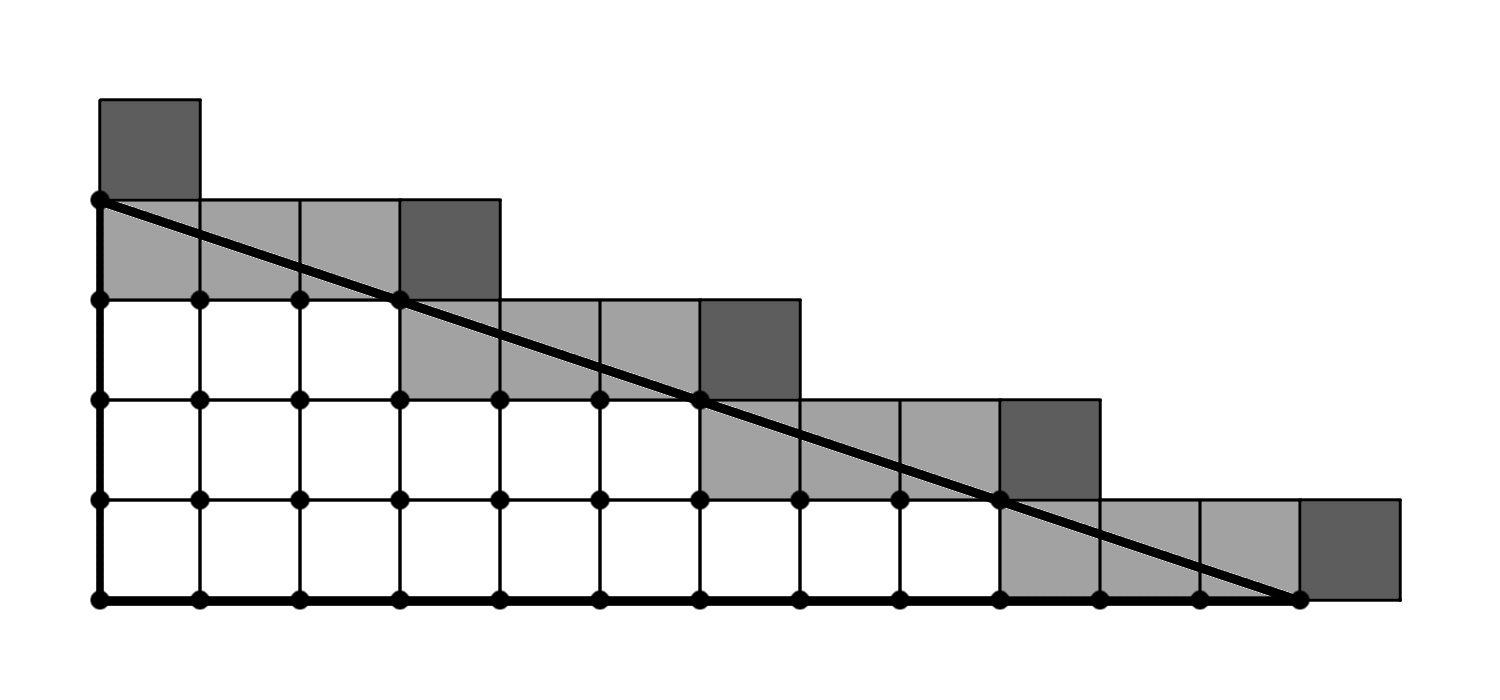}
        \caption{The polytope in this example is a triangle with height 1 and
            base 3. Furthermore, $k$ = 4. The dots correspond to lattice points
            in $\cP_3$. The white squares correspond to the set $P_{\cI,3}$, the
            light gray squares correspond to the set $P_{\cF,3}$ and the dark
            gray squares correspond to the set $P_{\cE,3}$.}
        \label{figure}
    \end{center}
\end{figure}
\begin{lemma}
    \label{}
    Let $h$ be a $C^2$ function on $B = \Box_k(0, \ldots, 0)$ and $p_{k,m}$ the
    midpoint of $B$. Then
    \begin{equation*}
        \label{}
        \left| k^n \left( \int_B h d\mu \right) - h(p_{k,m}) \right| \leq
            \frac{1}{k^2} C_n ||h||_{C^2(B)},
    \end{equation*}
    where $C_n$ only depends on the dimension $n$.
\end{lemma}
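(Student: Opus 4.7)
The plan is to recognize the quantity $k^n \int_B h\, d\mu$ as the average value of $h$ on the box $B$ (since $\mathrm{Vol}(B) = k^{-n}$), and then use the standard midpoint quadrature error estimate. The approach is a second-order Taylor expansion of $h$ about the midpoint $p_{k,m}$, exploiting the symmetry of $B$ about $p_{k,m}$ to kill the linear term.

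Concretely, I would first write
\begin{equation*}
    h(x) = h(p_{k,m}) + \nabla h(p_{k,m}) \cdot (x - p_{k,m}) + R(x),
\end{equation*}
where, by the integral form of Taylor's remainder, $|R(x)| \leq \tfrac{1}{2} \|h\|_{C^2(B)} |x - p_{k,m}|^2$. Integrating this identity over $B$, the constant term contributes $h(p_{k,m}) \cdot k^{-n}$. For the linear term, observe that the map $x \mapsto 2p_{k,m} - x$ is a measure-preserving involution of $B$ under which $(x - p_{k,m})$ changes sign, so $\int_B (x - p_{k,m})\, d\mu = 0$ and the linear term drops out.

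It remains to bound the remainder. Since $B$ is a cube of side $1/k$, we have $|x - p_{k,m}| \leq \tfrac{\sqrt{n}}{2k}$ for all $x \in B$, and hence
\begin{equation*}
    \left| \int_B R(x)\, d\mu \right| \leq \frac{1}{2} \|h\|_{C^2(B)} \cdot \frac{n}{4k^2} \cdot \frac{1}{k^n}.
\end{equation*}
Multiplying through by $k^n$ yields the desired estimate with, say, $C_n = n/8$.

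I do not expect any real obstacle: this is the classical midpoint rule error bound applied to a small cube, and the only mild subtlety is to make sure the constant $C_n$ is extracted cleanly and depends solely on $n$ (not on $k$ or on $h$). The cancellation of the linear term via symmetry is what produces the sharp $k^{-2}$ rate rather than a naive $k^{-1}$ rate, and this is the one ingredient one should not skip in the writeup.
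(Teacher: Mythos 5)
Your proof is correct, but it takes a genuinely different route from the paper. You perform a single multivariate Taylor expansion of $h$ about the midpoint and use the reflection symmetry $x \mapsto 2p_{k,m} - x$ of the cube to annihilate the linear term, leaving a remainder controlled by the Hessian and the squared diameter $\bigl(\tfrac{\sqrt{n}}{2k}\bigr)^2$. The paper instead proves the one-dimensional case by a Peano-kernel-style double integration by parts: it splits $[0,\tfrac1k]$ at the midpoint, integrates $h$ against carefully chosen quadratic antiderivatives (with constants picked so the boundary terms collapse to $\tfrac1k h(\tfrac{1}{2k})$), obtaining the sharp constant $\tfrac{1}{24k^2}\max|h''|$, and then handles higher dimensions by induction, applying the one-dimensional bound in the last variable and the inductive hypothesis in the remaining ones. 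Your argument is shorter and more conceptual, and it makes the source of the $k^{-2}$ rate (midpoint symmetry) transparent in one step; the paper's argument buys a cleaner constant that depends only on pure coordinate second derivatives rather than the full Hessian norm, though after the induction the constant accumulates a dimension-dependent factor anyway, so the two are equivalent for the purposes of the lemma, where $C_n$ is allowed to depend on $n$. The only point to be careful about in your writeup is that the bound $|R(x)| \leq \tfrac12 \|h\|_{C^2(B)}\,|x - p_{k,m}|^2$ implicitly compares the operator norm of the Hessian with whatever convention defines $\|h\|_{C^2}$; this costs at most a factor of $n$, which is harmlessly absorbed into $C_n$.
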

\begin{proof}
    First consider the one-dimensional case where $B = [0, \frac{1}{k}]$.
    Integrating by parts, we see
    \begin{align*}
        \int_0^{\frac{1}{k}} h(x)dx &= \int_0^{\frac{1}{2k}} h(x)dx +
                \int_{\frac{1}{2k}}^{\frac{1}{k}} h(x)dx \\
            &= \int_0^{\frac{1}{2k}} \left( \frac{x^2}{2} + Ax + B \right)
                h''(x) dx - \left. \left. \left( \frac{x^2}{2} + Ax + B \right)
                h'(x) \right|^{\frac{1}{2k}}_0 + (x+A) h(x)
                \right|^{\frac{1}{2k}}_0 \\
            &+ \int_{\frac{1}{2k}}^{\frac{1}{k}} \left( \frac{x^2}{2} + Cx + D
                \right) h''(x) dx - \left. \left. \left( \frac{x^2}{2} + Cx + D
                \right) h'(x) \right|_{\frac{1}{2k}}^{\frac{1}{k}} + (x+C) h(x)
                \right|_{\frac{1}{2k}}^{\frac{1}{k}},
    \end{align*}
    where $A, B, C,$ and $D$ are constants that we can choose freely. By choosing
    $A = B = 0$, $C = -\frac{1}{k}$, and $D = \frac{1}{2k^2}$, we see that
    \begin{equation*}
        \label{}
        \int_0^\frac{1}{k} h(x)dx = \frac{1}{k} h \left( \frac{1}{2k} \right) +
            \int_0^\frac{1}{2k} \frac{x^2}{2}h''(x)dx +
            \int_\frac{1}{2k}^\frac{1}{k} \frac{\left( x - \frac{1}{k}
            \right)^2}{2} h''(x)dx.
    \end{equation*}
    Hence we have
    \begin{equation*}
        \label{}
        \left| k \int_0^\frac{1}{k} h(x)dx - h \left( \frac{1}{2k} \right)
            \right| \leq \frac{1}{24k^2}\max_{x \in \left[ 0, \frac{1}{k}\
            \right]} |h''(x)|.
    \end{equation*}
%

%
    The proof is completed by induction. Assume the lemma is true for the
    $n$-dimensional case. Let $h$ be a function of $n+1$ variables. Define
    $\tilde{h}(x) = h(x_1, \ldots, x_n, x)$ and apply the previous argument and
    the induction hypothesis to get the desired result.
\end{proof}
\begin{lemma}
    \label{}
    Let $h$ be a $C^2$ function on $P$ and $p_{k,m}$ the midpoint of box
    $\Box_k(p)$. Then we have
    \begin{equation*}
        \label{}
        \left| k^n \int_{P_{\cI,k}} h(x) dx - \sum_{p \in \cI_k} h(p_{k,m})
            \right| \leq k^{n-2} C_n K_P ||h||_{C^2(P)|},
    \end{equation*}
    where $C_n$ is the same constant as the last lemma, and $K_P$ is a constant
    depending on the geometry of $P$.
\end{lemma}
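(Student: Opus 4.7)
The plan is to reduce this lemma to the previous (single-box) lemma by summing its estimate over all interior boxes. By the definition of $\cI_k$, every $p \in \cI_k$ has $\Box_k(p) \subset P$, so the preceding lemma applies to $h$ restricted to each such box. Translating that lemma from the reference box at the origin to $\Box_k(p)$ (which is a matter of integration-by-parts on the same domain shifted by $p$, so the same constant $C_n$ and the same $k^{-2}$ error work) yields, for each $p \in \cI_k$,
\begin{equation*}
    \left| k^n \int_{\Box_k(p)} h\, d\mu - h(p_{k,m}) \right| \leq \frac{C_n}{k^2}\, \|h\|_{C^2(\Box_k(p))} \leq \frac{C_n}{k^2}\, \|h\|_{C^2(P)},
\end{equation*}
where the second inequality uses $\Box_k(p) \subset P$.

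Next I would note that by construction $P_{\cI,k} = \bigcup_{p \in \cI_k} \Box_k(p)$, and this union has pairwise interior-disjoint terms, so additivity of the integral gives $\int_{P_{\cI,k}} h\, d\mu = \sum_{p \in \cI_k} \int_{\Box_k(p)} h\, d\mu$. Applying the triangle inequality to the sum of the per-box estimates produces
\begin{equation*}
    \left| k^n \int_{P_{\cI,k}} h\, d\mu - \sum_{p \in \cI_k} h(p_{k,m}) \right| \leq \sum_{p \in \cI_k} \frac{C_n}{k^2}\, \|h\|_{C^2(P)} = \frac{C_n\, |\cI_k|}{k^2}\, \|h\|_{C^2(P)}.
\end{equation*}

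Finally, I need a uniform bound of the form $|\cI_k| \leq K_P\, k^n$. This is the standard observation that $|\cI_k|$ counts lattice points in $\tfrac{1}{k}\mathbb{Z}^n$ whose surrounding box lies in $P$, so $|\cI_k|/k^n$ converges to $\mathrm{Vol}(P)$ and is in particular bounded uniformly in $k$ by some constant $K_P$ depending only on the geometry of $P$ (e.g., one can take $K_P = 2\,\mathrm{Vol}(P)$ for $k$ large and adjust for small $k$). Substituting this into the displayed bound gives exactly the claimed inequality
\begin{equation*}
    \frac{C_n\, |\cI_k|}{k^2}\, \|h\|_{C^2(P)} \leq C_n\, K_P\, k^{n-2}\, \|h\|_{C^2(P)}.
\end{equation*}

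There is no serious obstacle here: the only substantive point is that the previous lemma's constant depends only on dimension and that the per-box estimate survives summation because the number of interior boxes grows like $k^n$ while each error is $k^{-2}$, yielding the predicted $k^{n-2}$ scaling. The slight care needed is the translation-invariance of the previous lemma (immediate from its proof) and the replacement of the local $C^2$ norm on each box by the global one on $P$, both of which cost nothing.
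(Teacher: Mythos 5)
Your proof is correct and follows exactly the paper's approach, which consists of the single line ``Sum up the previous lemma over the points in $\cI_k$''; you have merely filled in the implicit details (translation invariance of the one-box estimate, disjointness of the boxes, and the bound $|\cI_k| \leq K_P k^n$).
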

\begin{proof}
    Sum up the previous lemma over the points in $\cI_k$.
\end{proof}
%

%
These lemmas yield a sort of asymptotic estimate for
\begin{equation*}
    \label{}
    k^n \int_{P_{\cI,k}} h(x)dx,
\end{equation*}
but in order to estimate the right side of \eqref{lemma_sum}, we still need an
estimate for
\begin{equation}
    \label{eq_bound_int}
    k^n \int_{P \ \backslash \ P_{\cI,k}} h(x)dx.
\end{equation}
We will compare \eqref{eq_bound_int} to the sum
\begin{equation}
    \label{eq_face_sum}
    \sum_{p \in \cF_k} h(x_{p,k}),
\end{equation}
where $x_{p,k}$ is some arbitrary point in $B = \Box_k(p)$. Note that if
$x_{p,k}, x_{p,k}' \in \Box_k(p)$, then
\begin{equation}
    \label{face_sum_error}
    |h(x_{p,k}) - h(x_{p,k}')| \leq \frac{\sqrt{n}}{k}||h||_{C^1(B)}.
\end{equation}
Now let $m_k(p) \in \Box_k(p)$ be such that
$\min_{\Box_k(p)} h = h(m_k(p))$ and define $M_k(p)$ similarly to be where $h$
takes its maximum. We have then that
\begin{equation}
    \label{face_min_max}
    \sum_{p \in \cF_k} h(m_k(p)) \leq k^n \int_{P_{\cF,k}} h(x)dx \leq \sum_{p \in
        \cF_k} h(M_k(p)).
\end{equation}
\begin{lemma}
    \label{}
    There exists a constant $C$ depending only on the dimension $n$, the
    geometry of $P$, and $||h||_{C^1(P)}$ such that
    \begin{equation*}
        \label{}
        \left| k^n \int_{P \ \backslash \ P_{\cI,k}} hd\mu - \frac{1}{2}
            \sum_{p \in \cF_k} h(x_{p,k}) \right| \leq Ck^{n-2}.
    \end{equation*}
    where, as before, $x_{p,k}$ is any point in $\Box_k(k)$.
\end{lemma}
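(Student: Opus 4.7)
The plan is a box-by-box comparison: rewrite the left-hand side as a sum over face boxes, reduce to a pointwise identity up to controllable errors, and then handle the main estimate face by face.

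First I would decompose
\[
k^n \int_{P \setminus P_{\cI,k}} h\, d\mu = \sum_{p \in \cF_k} k^n \int_{\Box_k(p) \cap P} h\, d\mu,
\]
using that every point of $\ov{P} \setminus P_{\cI,k}$ lies in some face box (the exterior boxes meet $P$ in a set of measure zero). On each face box the $C^1$ bound gives $|h(x) - h(x_{p,k})| \leq \sqrt{n}\, k^{-1}\|h\|_{C^1}$, so replacing $h$ by the constant $h(x_{p,k})$ changes each integral by at most $O(k^{-n-1}\|h\|_{C^1})$. Since $|\cF_k| = O(k^{n-1})$, summing this per-box error contributes only $O(k^{n-2})$ overall, reducing the claim to
\[
\sum_{p \in \cF_k} \Bigl( k^n \mathrm{Vol}(\Box_k(p) \cap P) - \tfrac{1}{2} \Bigr)\, h(x_{p,k}) = O(k^{n-2}).
\]

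To establish this I would work face by face. Partition $\cF_k$ according to which codimension-one face $F$ of $\ov{P}$ cuts the box; the ambiguity near the lower-dimensional strata where two or more faces meet involves only $O(k^{n-2})$ face points, which is within the allowed error under any bounded weight. Fix such an $F$ with primitive inward normal $v_F$ and affine defining function $l_F$. Because $v_F$ is primitive, there is a unimodular change of coordinates on $\mathbb{Z}^n$ sending $v_F$ to a standard basis vector, bringing the local cut geometry into a normal form in which $k^n \mathrm{Vol}(\Box_k(p) \cap P)$ is an explicit, piecewise-polynomial function of the single parameter $k\, l_F(p)$. As $p$ ranges over face points in $\cF_k(F)$, this parameter traces out a bounded interval with lattice-residue spacing $1/k$, and a direct integration shows that the mean of the cut-fraction over the residues is exactly $\tfrac{1}{2}$. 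Combining this mean-$\tfrac{1}{2}$ identity with the slow variation of $h$ in the direction tangential to $F$ (another $C^1$ estimate) allows the deviations $k^n \mathrm{Vol}(\Box_k(p) \cap P) - \tfrac{1}{2}$ to cancel up to $O(k^{-1})$ per row, by a one-dimensional Euler--Maclaurin-style summation transverse to $F$; summing over the $O(k^{n-1})$ face points and finitely many faces yields the required $O(k^{n-2})$ bound.

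The main obstacle is the cancellation step: the per-box deviations are not individually small, only zero on average, so one cannot simply bound them term by term but must exploit the regularity of $h$ to sum them against a slowly-varying weight. The argument hinges on two ingredients produced by the unimodular normalization—first, that the cut-fraction at each box is an explicit function of the lattice residue so its average can be computed, and second, that adjacent face points along $F$ agree on $h$ up to $O(k^{-1})$, so oscillating differences telescope. Corners and $(n{-}2)$-dimensional faces then require only coarse bookkeeping, since they host too few face points to threaten the estimate.
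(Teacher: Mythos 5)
Your argument is correct in substance and gets to the estimate by a genuinely different route from the paper, although both ultimately rest on the same symmetry. The paper keeps the comparison at the level of integrals: it reduces to bounding $|k^n\int_{P\setminus P_{\cI,k}}h\,d\mu-\tfrac12 k^n\int_{P_{\cF,k}}h\,d\mu|$ and then pairs each cut box $B$ with its reflection $B'$ through a nearest point of $\cE_k$, using the observation that for a box cut by a hyperplane the portions of $B$ and of its point-reflection lying on a fixed side sum to one full box volume; the price is some awkward bookkeeping (the reflected box need not correspond to a point of $\cF_k$, and the nearest exterior point need not be unique, which the paper handles by discarding points and weighting pairs). You instead freeze $h$ on each box (a legitimate $O(k^{n-2})$ move since $|\cF_k|=O(k^{n-1})$) and linearize the geometry: the cut fraction $k^n\mathrm{Vol}(\Box_k(p)\cap P)$ for boxes cut by a facet $F$ is a function $G$ of the single integer $t=k\,l_F(p)$, the deviations $G(t)-\tfrac12$ sum to zero \emph{exactly} over the transition range because of the central symmetry $G(t)+G(-\langle v_F,\mathbf 1\rangle-t)=1$ of the cube about its midpoint (the very symmetry the paper realizes as a reflection of boxes), and summing these exactly cancelling deviations against the $C^1$-slowly-varying weight $h$ along lattice columns transverse to $F$ gains the extra factor of $k^{-1}$. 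This organization by level sets of $l_F$ is cleaner than the paper's pairing and sidesteps its non-uniqueness issues. Two points to repair: the unimodular change of coordinates is unnecessary and slightly misleading, since it would turn the cubes $\Box_k(p)$ into parallelepipeds --- the relevant fact is simply that the cut fraction depends only on $k\,l_F(p)$, which holds with no change of coordinates because $l_F$ is affine with fixed gradient $v_F$ and takes values in $\tfrac1k\mathbb{Z}$ on $\tfrac1k\mathbb{Z}^n$; and your opening identity $k^n\int_{P\setminus P_{\cI,k}}h\,d\mu=\sum_{p\in\cF_k}k^n\int_{\Box_k(p)\cap P}h\,d\mu$ requires every cut box to have its corner point in $\ov{P}$, which is true when the inward normal of each facet has no positive component (as for the stretched standard simplices to which the paper ultimately reduces) but fails for general facet normals, where a box can meet $P$ in positive measure while its corner lies just outside $\ov{P}$ and hence indexes no point of $\cF_k\subset\cP_k$; this step should therefore either be carried out in the normalized setting or the family of boundary boxes enlarged to include all boxes cut by $\partial P$.
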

\begin{proof}
    Given \eqref{face_min_max} and \eqref{face_sum_error}, we need only show
    \begin{equation}
        \label{face_comparison_integral}
        \left| k^n \int_{P \ \backslash \ P_{\cI,k}} hd\mu - \frac{1}{2}
            k^n \int_{P_{\cF,k}} hd\mu \right| \leq Ck^{n-2}.
    \end{equation}
    The idea of this proof is the following observation: Assume we are given a
    rational plane $H \subset \real^n$ through the origin which cuts $\real^n$ into
    two pieces $S_1$ and $S_2$. Furthermore, assume we are given a hypercube $B =
    \Box_1(p)$ such that $H$ intersects the interior of $B$. If $B'$ is the
    hypercube given by reflecting $B$ about the origin, then the pair $(B,B')$
    has the property that $\mathrm{Vol}(B \cap S_1) + \mathrm{Vol}(B' \cap S_1)
    = 1$. We will use this idea to prove \eqref{face_comparison_integral} by
    considering each of the different lattice points in $\cF_k$ as our
    ``origin".
%

%
    We would ideally like to proceed as follows. Let $p \in \cF_k$ be a lattice
    point and let $q \in \cE_k$ be the unique point in $\cE_k$ which is closest
    to $B = \Box_k(p)$. Then let $p'$ be the corner point of the box $B'$ given
    by reflecting $B$ about the point $q$.
%

%
    There are two problems with this approach. The first is that the
    corresponding point $p'$ may not lie in $\cF_k$. This will be true for the
    boxes $B$ which are close to the boundary of $F$. We deal with this problem
    by not considering the points $p$ which have no corresponding point. The
    second problem is that the point $q$ need not actually be unique. This could
    be handled multiple ways, but the easiest seems to be to do the following:
    Let $d$ be the distance from $B$ to the lattice $\cE_k$. Let $Q_B$ be the
    set of $q \in \cE_k$ such that $\mathrm{dist}(B,q) = d$. Let $N_B$ the
    number of elements in $Q_B$. Finally consider $N_B$ pairs $(B, B')$---one
    for each different $q \in Q_B$---and in the end weight each pair by the
    fraction $\frac{1}{N_B}$. This allows us to compare the two integrals in
    \eqref{face_comparison_integral} with the desired precision.
    \end{proof}
Taken together, these lemmas result in the following:
\begin{lemma}
    \label{}
    There is a constant $C$ depending only on the geometry of $P$,
    $||h||_{C^2}$, and the dimension $n$ so that
    \begin{equation}
        \label{midpoint_approx}
        \left| k^n \int_P h(x) - \left( \sum_{p \in \cI_k} h(p_{k,m}) + \frac{1}{2}
            \sum_{p \in \cF_k} h(x_{p,k}) \right) \right| \leq Ck^{n-2},
    \end{equation}
    where $x_{p,k}$ is an arbitrary point in $\Box_k(p)$ as before.
\end{lemma}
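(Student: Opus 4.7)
The plan is to observe that this lemma is essentially a consequence of the two preceding lemmas combined via the triangle inequality, once one decomposes the domain of integration appropriately. First I would split the integral as
\[
\int_P h\,d\mu \;=\; \int_{P_{\cI,k}} h\,d\mu \;+\; \int_{P \setminus P_{\cI,k}} h\,d\mu,
\]
using the fact that $P_{\cI,k} \subset P$ and that the complement $P \setminus P_{\cI,k}$ is (up to a measure-zero set of shared box faces) contained in $P \cap P_{\cF,k}$, while the exterior region $P_{\cE,k}$ meets $P$ only in a finite union of single corner points and therefore contributes nothing to the integral.

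Next I would apply the interior lemma to the first piece, which gives a constant $C_1$ depending on $n$, the geometry of $P$, and $\|h\|_{C^2(P)}$ such that
\[
\left| k^n \int_{P_{\cI,k}} h\,d\mu \;-\; \sum_{p \in \cI_k} h(p_{k,m}) \right| \leq C_1 k^{n-2}.
\]
Then I would apply the face lemma to the second piece, obtaining a constant $C_2$ with the analogous dependencies such that
\[
\left| k^n \int_{P \setminus P_{\cI,k}} h\,d\mu \;-\; \tfrac{1}{2}\sum_{p \in \cF_k} h(x_{p,k}) \right| \leq C_2 k^{n-2}.
\]
Adding these two bounds via the triangle inequality yields \eqref{midpoint_approx} with $C = C_1 + C_2$.

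The main subtlety, and the only point deserving care, is the bookkeeping for where the boxes overlap $\partial P$: one must verify that $P \setminus P_{\cI,k}$ differs from $P \cap P_{\cF,k}$ only on a set of measure zero (the shared faces of adjacent boxes), so that invoking the face lemma is legitimate. I would handle this by noting that any point of $P$ not in $P_{\cI,k}$ either lies in the interior of some box of $P_{\cF,k}$ or on a shared face; since shared faces have $n$-dimensional Lebesgue measure zero, the integrals over $P \setminus P_{\cI,k}$ and $P \cap P_{\cF,k}$ agree. Since the hard technical work (the midpoint error estimate and the reflection argument pairing face boxes) has already been carried out in the two previous lemmas, no further estimates are needed, and the final statement follows immediately.
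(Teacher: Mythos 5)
Your proposal is correct and matches the paper's intent exactly: the paper states this lemma with no proof beyond the phrase ``Taken together, these lemmas result in the following,'' i.e.\ it is obtained precisely by splitting $\int_P h\,d\mu$ into the integrals over $P_{\cI,k}$ and $P \setminus P_{\cI,k}$ and applying the two preceding lemmas with the triangle inequality. Your extra remark about the measure-zero overlap of box faces is harmless but not even needed, since the face lemma is already stated directly for $\int_{P \setminus P_{\cI,k}}$.
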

%

%
We are finally in the position to prove Lemma \ref{sigma_lemma}.
\begin{proof}
    To prove this we may assume that $P$ is a ``stretched standard simplex".
    I.e. that there is a vertex $v$ of $P$, such that if one chooses $v$ as the
    origin, then $P$ is given as the convex hull of the origin $v$ and the
    points $p_1e_1, \ldots, p_ne_n$, where $p_i > 0$ and $e_i$ is the standard
    basis vector. Any polytope $P$ can be deconstructed into such stretched
    standard simplices and then if one applies Lemma \ref{sigma_lemma} to each
    piece, one gets the result for all of $P$.
%

%
    The asymptotic sum $S_k$ we need to approximate is given by
    \begin{equation}
        \label{Sk_def}
        S_k = \sum_{p \in \cP_k} h(p) = \sum_{p \in \cI_k} h(p) + 
                \sum_{p \in \cF_k} h(p) + \sum_{p \in \cE_k} h(p).
    \end{equation}
    The main idea of the proof is to compare \eqref{Sk_def} with the ``midpoint
    rule" for integrals. Given \eqref{midpoint_approx}, we only need to
    understand the asymptotics of the difference $S_k - M_k$, where
    \begin{equation}
        \label{midpoint_sum_def}
        M_k = \left( \sum_{p \in \cI_k} h(p_{k,m}) + \frac{1}{2} \sum_{p \in
            \cF_k} h(p_{k,m}) \right).
    \end{equation}
    Now let $p$ be any lattice point in $\pP \ \backslash \ F$. Let $l_{p,k}(j) =
    p + \frac{1}{2k}(j, \ldots, j).$ I.e. $l_{p,k}(0) = p$, $l_{p,k}(1) =
    p_{k,m}$, etc. Let $L_{p,k} = l_{p,k}(\real)$ be the line through $p$
    parallel to the vector $(1, \ldots, 1)$.
%

%
    Now for each $p$, we will define an alternating sum $A_{p,k}$ as follows. If
    $p \in \pP \ \cap F$, then define $A_p = h(p)$. Otherwise, if $L_{p,k} \cap
    P$ is a line segment connecting $p$ to an element of $\cE_k$, define
    $A_{p,k}$ as
    \begin{equation*}
        \label{}
        A_{p,k} = h(l_{p,k}(0)) - h(l_{p,k}(1)) + h(l_{p,k}(2)) - h(l_{p,k}(3))
            \pm \cdots + h(l_{p,k}(N)),
    \end{equation*}
    where we have $l_{p,k}(N) \in \cE_k$ is that final terminating point.
    Finally, if $p$ satisfies neither of the preceeding requirements, define
    \begin{equation*}
        \label{}
        A_{p,k} = h(l_{p,k}(0)) - h(l_{p,k}(1)) + h(l_{p,k}(2)) - h(l_{p,k}(3))
            \pm \cdots + h(l_{p,k}(N-1)) - \frac12 h(l_{p,k}(N)),
    \end{equation*}
    where $l_{p,k}(N)$ is the midpoint of the box $\Box_k(q)$ and $q$ is the
    point in $\cF_k$ which lies on the line $L_{p,k}$.
%

%
    With this setup, we have that
    \begin{equation*}
        \label{}
        S_k - M_k = \sum_{p \in \pP \ \backslash \ F} A_{p,k}.
    \end{equation*}
    Now if $L_{p,k} \cap P$ is a line terminating in a point in $\cE_k$, then we
    have that
    \begin{align*}
        \label{}
        A_{p,k} &= \frac12 h(l_{p,k}(0)) + \frac12 \Bigg\{ [h(l_{p,k}(0)) -
                h(l_{p,k}(1))] - [h(l_{p,k}(1)) - h(l_{p,k}(2))] \\
            & \ \ \ \  + \cdots - [h(l_{p,k}(N-1)) - h(l_{p,k}(N))]
                \Bigg\} + \frac12 h(l_{p,k}(N)). \\
    \end{align*}
    Due to convexity, the middle terms form an alternating series, and hence
    $A_{p,k} = \frac12 [h(l_{p,k}(0)) + h(l_{p,k}(N))] + \frac Ck,$ for some
    constant $C$ depending on the derivative of $h$. Going back to the case
    where $L_{p,k} \cap P$ does not terminate in a point in $\cE_k$, similar
    arguments show that $A_{p,k} = \frac12 h(l_{p,k}(0)) + \frac Ck$, with $C$
    once again depending upon $||h||_{C^1(P)}$.
%

%
    Combining these results we have that up to highest order $S_k - M_k =
    \frac12 \sum_{p \in \pP} h(p)$. If we apply Lemma \ref{sigma_def2_lemma},
    the proof of Lemma \ref{sigma_lemma} is complete.
\end{proof}

\noindent \textbf{Remark:} In the preceeding proof we essentially only used the
fact that $h$ is convex along lines parallel to the vector $(1, \cdots, 1)$.
One may be tempted to conclude that that is all that is necessary for Lemma
\ref{sigma_lemma}.  However, in the previous proof we assumed that $P$ was in
the form of a stretched standard simplex. If $P$ is arbitrary, we would need to
decompose it into stretched standard simplices and apply this result to each one
individually. On those other simplices, we would most likely have to change
orientations and consider lines that are going in other directions. Hence in
general we do need $h$ to be convex in all directions for Lemma
\ref{sigma_lemma} to be true.

{\noindent \footnotesize Department of Mathematics\\
Columbia University, New York, NY 10027\\
tomnyberg@math.columbia.edu\\}

\end{normalsize}
\end{document}